\newtheorem{theorem}{Theorem}[section]
\newtheorem{remark}{Remark}[section]
\newtheorem{corollary}[theorem]{Corollary}
\newtheorem{lemma}[theorem]{Lemma}
\newtheorem{definition}[theorem]{Definition}
\newtheorem{conjecture}[theorem]{Conjecture}
\newtheorem*{definition*}{Definition}
\begin{document}
\title{Mattila--Sj\"{o}lin type functions: A finite field model}

\author{Daewoong Cheong \thanks{Department of Mathematics, Chungbuk National University. Email: {\tt daewoongc@chungbuk.ac.kr}}\and
Doowon Koh\thanks{Department of Mathematics, Chungbuk National University. Email: {\tt koh131@chungbuk.ac.kr}}
\and
    Thang Pham\thanks{Department of Mathematics,  ETHZ Switzerland. Email: {\tt phamanhthang.vnu@gmail.com}}
  \and
  Chun-Yen Shen \thanks{Department of Mathematics,  National Taiwan University. Email: {\tt cyshen@math.ntu.edu.tw}}
}
\date{}
\maketitle

\begin{abstract}
Let $\phi(x, y)\colon \mathbb{R}^d\times \mathbb{R}^d\to \mathbb{R}$ be a function. We say $\phi$ is a \textit{Mattila--Sj\"{o}lin type function} of index $\gamma$ if $\gamma$ is the smallest number satisfying the property that for any compact set $E\subset \mathbb{R}^d$, %we have
$\phi(E, E)$ has a non-empty interior whenever $\dim_H(E)>\gamma$. The usual distance function, $\phi(x, y)=|x-y|$, is conjectured to be a Mattila--Sj\"{o}lin type function of index $\frac{d}{2}$. In the setting of finite fields $\mathbb{F}_q$, this definition is equivalent to the statement that $\phi(E, E)=\mathbb{F}_q$ whenever $|E|\gg q^{\gamma}$. The main purpose of this paper is to prove the existence of such functions with index $\frac{d}{2}$ in the vector space $\mathbb{F}_q^d$.
\end{abstract}
\noindent \textbf{Keywords}: Erd\H{o}s-Falconer distance problem, Falconer distance conjecture, Mattila--Sj\"{o}lin type functions, Finite Fields.  \\ \textbf{Mathematics Subject Classification}:  52C10, 28A75, 11T23\\
\section{Introduction}
For $E\subset \mathbb{R}^d$, we define its distance set by $\Delta(E):=\{|x-y|\colon x, y\in E\}$.
The classical Falconer distance conjecture \cite{falconer} says that for any compact set $E\subset \mathbb{R}^d$, if the Hausdorff dimension { $\dim_H(E)$ of $E$ is greater than $\frac{d}{2}$, then the Lebesgue measure $\mathcal{L}(\Delta(E))$ of %the distance set
$\Delta(E)$ is positive.}

The first result was given by Falconer \cite{falconer} in 1983, which says that if $\dim_H(E)>\frac{d+1}{2}$, then $\mathcal{L}(\Delta(E))>0$. %where we denote the Hausdorff dimension of a set $E$ by  $\dim_H(E)$  and the Lebesgue measure of the distance set by $\mathcal{L}(\Delta(E))$.
The threshold $\frac{d+1}{2}$ was improved to $\frac{d}{2}+\frac{1}{3}$ by Wolff \cite{37} in 1999 for $d=2$ and Erdo\u{g}an \cite{Erdogan} in 2005 for $d\ge 3$. The followings are the current best results:
\begin{itemize}
\item $d=2$, Guth, Iosevich, Ou, and Wang \cite{alex-fal} (2019): $1+\frac{1}{4}$
\item $d=3$, Du, Guth, Ou, Wang, Wilson, and Zhang \cite{Du1} (2017): $\frac{3}{2}+\frac{3}{10}$
\item $d\ge 4$ even, Du, Iosevich, Ou, Wang, and Zhang \cite{Du3} (2020): $\frac{d}{2}+\frac{1}{4}$
\item $d\ge 5$ odd, Du and Zhang \cite{Du2} (2018): $\frac{d}{2}+\frac{d}{4d-2}$
\end{itemize}
In another direction, Mattila and
Sj\"{o}lin \cite{MS} obtained a stronger conclusion under the same condition $\dim_H(E)>\frac{d+1}{2}$, namely, one has that the distance set has non-empty interior. It has also been conjectured that the right dimension should be $\frac{d}{2}$,  see \cite[Conjecture 4.4.]{M}. Several extensions of this result for general functions and configurations have been obtained recently, for instance, see \cite{GIK, GIK1}. In a very recent paper, the second, third, and fourth listed authors obtained an improvement for Cartesian product sets, namely, $E=A^d\subset \mathbb{R}^d$. In particular, we have
\begin{theorem}[\cite{koh}] Let $A$ be a compact set in $\mathbb R.$ Then we have $Int(\Delta(A^d))\ne \emptyset$ provided that
$$ \dim_H(A) > \left\{ \begin{array}{ll} \frac{d+1}{2d} \quad &\mbox{if}~~ 2\le d\le 4\\
\frac{d+1}{2d}-\frac{d-4}{2d(3d-4)}\quad &\mbox{if}~~ 5\le d\le 26\\
\frac{d+1}{2d}-\frac{23d-228}{114d(d-4)}\quad &\mbox{if}~~ 27\le d. \end{array} \right.$$
\end{theorem}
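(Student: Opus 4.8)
The plan is to reduce the statement to a quantitative estimate for a one--dimensional oscillatory integral and then to feed in restriction/decoupling information for the paraboloid. Since squaring is a diffeomorphism of $(0,\infty)$, it suffices to produce an interval inside the squared distance set $\Delta(A^d)^2=\big\{\sum_{i=1}^d (x_i-y_i)^2:\ x_i,y_i\in A\big\}$. Fix $s<\dim_H(A)$ and a Frostman probability measure $\mu_0$ on $A$ with $\mu_0(B(x,r))\lesssim r^{s}$, and set $\mu=\mu_0\otimes\cdots\otimes\mu_0$ on $A^d$. Let $\nu$ be the push--forward of $\mu\times\mu$ under $(x,y)\mapsto |x-y|^2$. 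Following the Mattila--Sj\"olin scheme, it is enough to show that $\nu$ has a continuous density, for which a sufficient condition is $\widehat{\nu}\in L^1(\mathbb R)$; the set where this density is positive is then a nonempty open subset of $\overline{\Delta(A^d)^2}$, and a standard regularity argument upgrades this to an honest interval inside $\Delta(A^d)^2$.

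The decisive simplification is that the product structure factorises the Fourier transform. Because $|x-y|^2=\sum_{i=1}^d(x_i-y_i)^2$ and $\mu$ is a tensor power,
\[
\widehat{\nu}(\xi)=\iint e^{-2\pi i\xi|x-y|^2}\,d\mu(x)\,d\mu(y)=I(\xi)^d,\qquad I(\xi):=\iint e^{-2\pi i\xi(u-v)^2}\,d\mu_0(u)\,d\mu_0(v).
\]
Thus the whole problem collapses to proving $\int_{\mathbb R}|I(\xi)|^{d}\,d\xi<\infty$, or equivalently the summable dyadic bounds $\int_{|\xi|\sim R}|I(\xi)|^{d}\,d\xi\lesssim R^{-\varepsilon}$. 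Here one must resist using only the pointwise size of $I$: the trivial bound $|I|\le 1$ is useless, and the concentration estimate for $\mu_0\ast\widetilde{\mu_0}$ near the diagonal controls only the stationary--phase core and ignores the fractal tail. Genuine decay has to be extracted from the oscillation of the quadratic phase $(u-v)^2$ tested against the rough measure $\mu_0\ast\widetilde{\mu_0}$.

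This is exactly where the restriction theory of the paraboloid enters. Writing $I(\xi)^d$ as the average $\iint e^{-2\pi i\xi\sum_i(x_i-y_i)^2}\,d\mu\,d\mu$ lets one compare $\int_{|\xi|\sim R}|I|^d\,d\xi$ with a Lebesgue norm of the extension operator for the paraboloid $\{(\zeta,|\zeta|^2)\}\subset\mathbb R^{d+1}$ applied to $\mu$, so that a restriction (or $\ell^2$--decoupling) estimate for the paraboloid in $\mathbb R^{d}$, combined with the $s$--energy of $\mu_0$, yields the required power gain. The baseline is already informative: since $\dim_H(A^d)\ge d\,\dim_H(A)$, the classical Mattila--Sj\"olin theorem ($\mathrm{Int}(\Delta(E))\ne\emptyset$ whenever $\dim_H(E)>\tfrac{d+1}{2}$) applied to $E=A^d$ gives the conclusion as soon as $d\,\dim_H(A)>\tfrac{d+1}{2}$, i.e. $\dim_H(A)>\tfrac{d+1}{2d}$. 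This is precisely the stated threshold in the first regime $2\le d\le 4$, and the content of the theorem is that for larger $d$ the factorisation $\widehat\nu=I^d$ allows one to exploit sharper paraboloid estimates and push the exponent strictly below $\tfrac{d+1}{2d}$.

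The main obstacle, and the source of the two break--points $d=5$ and $d=27$, is the optimisation of this last step: one needs the sharpest available $L^p$ restriction/extension exponent for the paraboloid in each dimension, and the best known bound changes form as $d$ grows. Carrying this out amounts to (i) making the reduction of $\int_{|\xi|\sim R}|I|^d\,d\xi$ to a clean extension norm precise, keeping exact track of the $s$--dependence, and (ii) inserting the optimal known restriction exponent and checking which one wins in each of the ranges $5\le d\le 26$ and $d\ge 27$; the expressions $\tfrac{d-4}{2d(3d-4)}$ and $\tfrac{23d-228}{114d(d-4)}$ are the arithmetic residue of this optimisation. I expect step (ii), squeezing the restriction numerology to its sharp form and verifying the crossover dimensions, to be the genuinely delicate part, while the reduction in (i) and the Mattila--Sj\"olin density argument are comparatively routine.
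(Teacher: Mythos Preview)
This theorem is not proved in the present paper; it is quoted in the introduction from the companion paper \cite{koh} (Koh--Pham--Shen) purely as background, and no argument for it appears anywhere in this text. There is therefore no ``paper's own proof'' to compare your proposal against.

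On the substance of your sketch: the reduction to showing that the push-forward measure $\nu$ has a continuous density via $\widehat{\nu}\in L^1$, and the factorisation $\widehat{\nu}(\xi)=I(\xi)^d$ coming from the tensor structure of the Frostman measure on $A^d$, are correct and are the natural starting point for product sets. Your recovery of the baseline threshold $\tfrac{d+1}{2d}$ from the classical Mattila--Sj\"olin theorem applied to $E=A^d$ is also right and explains the first range $2\le d\le 4$. What remains genuinely unspecified is the bridge from $\int_{|\xi|\sim R}|I(\xi)|^d\,d\xi$ to a concrete restriction or decoupling inequality. Note that once you have factorised, $I(\xi)=\int e^{-2\pi i\xi w^2}\,d(\mu_0*\widetilde{\mu_0})(w)$ is a one-dimensional object, so the relevant extension operator is that of the parabola in $\mathbb R^2$ acting on a fractal measure, not the paraboloid in $\mathbb R^{d+1}$ as you write; the dimension $d$ enters only through the exponent in $|I|^d$. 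Which $L^p$ bound one feeds in, and how the Frostman exponent $s$ interacts with it to produce exactly the constants $\tfrac{d-4}{2d(3d-4)}$ and $\tfrac{23d-228}{114d(d-4)}$ and the crossover at $d=27$, is where all the content lies. Your proposal flags this as the delicate step but does not carry it out, so as written it is a plausible outline rather than a proof, and the specific thresholds remain unverified.
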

Let $\phi(x, y)\colon \mathbb{R}^d\times \mathbb{R}^d\to \mathbb{R}$ be a function in $2d$ variables. We say that $\phi$ is a \textit{Mattila--Sj\"{o}lin type function} of index $\gamma$ if $\gamma$ is the smallest number satisfying the property that for any compact set {$E\subset \mathbb{R}^d$, %we have
$Int(\phi(E, E))$ } contains an interval whenever the Hausdorff dimension of $E$ is greater than $\gamma$, where $\phi(E, E):=\{\phi(x, y)\colon x, y\in E\}$. Therefore, it follows directly from Mattila and
Sj\"{o}lin's result that the distance function is of index at most $\frac{d+1}{2}$. It is conjectured that its index should be as small as $\frac{d}{2}$.

Let $\mathbb{F}_q$ be a finite field of order $q$ which is a prime power. The distance function between two points $x$ and $y$ in the space $\mathbb{F}_q^d$ is defined by $||x-y||:=(x_1-y_1)^2+\cdots+(x_d-y_d)^2$. The finite field model of the Falconer distance problem was studied by Iosevich and Rudnev in \cite{IR07}. More precisely, they proved that for any $E\subset \mathbb{F}_q^d$, if $|E|\ge 4q^{\frac{d+1}{2}}$, then $\Delta(E)=\mathbb{F}_q$, which is directly in line with the Mattila-Sj\"{o}lin's result. Notice that in the continuous setting, if  $\dim_H(E)>\alpha$ implies $\mathcal{L}^1(\Delta(E))>0$ or $Int(\Delta(E))\ne \emptyset$, then it is expected in finite fields that if $|E|\gg q^{\alpha}$, then $\Delta(E)$ covers a positive proportion of all distances, or $\Delta(E)=\mathbb{F}_q$, respectively. {Comparing the finite field and continuous settings,} there is a crucial difference when $q\equiv 1\mod 4$ or $d-1=4k$, $k\in \mathbb{N}$, namely, under one of those conditions, one can easily construct \textit{null-subspaces} $V$ of reasonably large dimensions, i.e. subspaces $V$ with $||x||=0, x\cdot y=0$ for all $x, y\in V$, which of course does not exist in $\mathbb{R}^d$. As a consequence, one can construct sets $E\subset \mathbb{F}_q^d$ which have highly arithmetic structures, and for these sets, it has been indicated in \cite{HIKR10} that the exponent $\frac{d+1}{2}$ is best possible. However, {in case  $q\equiv 3\mod 4$,} if $d\ge 2$ is even, or $d=4k-1$, $k\in \mathbb{N}$, the two settings follow in the same way, and it has been conjectured that the right exponent should be $\frac{d}{2}$ for a positive proportion of all distances, or even the whole field. This conjecture is still wide open. In the setting of prime fields, the  current best exponent in the plane is $\frac{5}{4}$ due to Murphy, Petridis, Pham, Rudnev, and Stevens \cite{MPT}. We refer the reader to \cite{koh1, MPT} for recent progress.

In the setting of finite fields,  we say that a function $\phi(x, y)\colon \mathbb{F}_q^d\times \mathbb{F}_q^d\to \mathbb{F}_q$ is a Mattila--Sj\"{o}lin type function of index $\gamma$ in $\mathbb{F}_q^d$ if $\gamma$ is the smallest number satisfying the property that for any $E\subset \mathbb{F}_q^d$ with $|E|\gg q^{\gamma}$, we have $\phi(E, E)=\mathbb{F}_q$.

In both the finite field (with some conditions) and continuous settings, the distance function is conjectured to be of index $\frac{d}{2}$. However, to the best knowledge of the authors, we are not aware of any function of index $\frac{d}{2}$. The main purpose of this paper is to prove the existence of such a function in vector spaces over finite fields.

To state our main theorems, we need the following definition.
\begin{definition}\label{def} Let $d=2n$ for a positive integer $n.$
For each $x=(x_1, x_2, \ldots, x_d) \in \mathbb F_q^d,$  we denote
$x'=(x_1,x_2, \ldots, x_n)$ and $x''=(x_{n+1}, x_{n+2}, \ldots, x_d).$
For each $x, y \in \mathbb F_q^d,$ we define
$$ \phi(x, y):=\left\{ \begin{array}{ll}\frac{||x'-y'||}{||x''-y''||} \quad &\mbox{if}~~ ||x''-y''|| \ne 0\\
0 \quad &\mbox{if}~~ ||x''-y''||=0.\end{array}\right.$$
\end{definition}

In the following theorems, we will see that the function $\phi$ defined as in Definition \ref{def} is a Mattila--Sj\"{o}lin type function of index $\frac{d}{2}$ if and only if $q\equiv 3\mod 4$ and $d=4$. Unlike the case of usual distance function, Theorems \ref{main1} and \ref{main22} provide a complete description in the setting of finite fields.
\begin{theorem}\label{main1}
Let $q\equiv 3 \mod{4}.$ If $E\subset \mathbb F_q^4$ and $|E|> q^2$, then
$$ \phi(E, E)=\mathbb F_q.$$
\end{theorem}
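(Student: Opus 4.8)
The plan is to reduce the claim $\phi(E,E)=\mathbb F_q$ to a statement about the two quadratic-form ``partial distance'' functions and then count solutions. Write $d=4$, $n=2$, so for $x=(x',x'')$ with $x',x''\in\mathbb F_q^2$ we have $\phi(x,y)=\|x'-y'\|/\|x''-y''\|$ when $\|x''-y''\|\ne 0$. Fix an arbitrary target $t\in\mathbb F_q$. The goal is to produce $x,y\in E$ with $\phi(x,y)=t$; equivalently, when $t\ne 0$, to find $x,y\in E$ with $\|x''-y''\|\ne 0$ and $\|x'-y'\|=t\,\|x''-y''\|$, i.e.
\begin{equation*}
\|x'-y'\|-t\,\|x''-y''\|=0,\qquad \|x''-y''\|\ne 0.
\end{equation*}
The case $t=0$ is separate and easier: it asks only for a pair with $\|x'-y'\|=0$ and $\|x''-y''\|\ne 0$, which I would handle by the same counting machinery (or by noting that $\phi$ takes the value $0$ automatically on the diagonal-type pairs where $\|x''-y''\|=0$, though one must check such a pair with $x\ne y$ exists inside $E$).

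First I would set up the counting via characters. For a fixed $t$, introduce the counting function
\begin{equation*}
N_t:=\#\{(x,y)\in E\times E:\ \|x'-y'\|=t\,\|x''-y''\|\}
\end{equation*}
and expand the indicator of the equation $\|x'-y'\|-t\|x''-y''\|=0$ using the additive character $\chi$ of $\mathbb F_q$, writing $\mathbf 1[z=0]=q^{-1}\sum_{s\in\mathbb F_q}\chi(sz)$. This turns $N_t$ into a main term $q^{-1}|E|^2$ plus an error term that is a sum over $s\ne 0$ of products of Gauss-type sums attached to the quadratic forms $\|x'-y'\|$ and $-t\|x''-y''\|$. The key point is that each partial-distance form lives on $\mathbb F_q^2$, and the relevant exponential sums $\sum_{u\in\mathbb F_q^2}\chi(s\|u\|)$ are classical Gauss sums whose magnitude is exactly $q$ (not $q^{1/2}\cdot$something), with the sign governed by the quadratic character of $-1$; this is where the hypothesis $q\equiv 3\bmod 4$ enters, since then $-1$ is a non-square and $\|\cdot\|$ on $\mathbb F_q^2$ is an anisotropic (non-degenerate, nonvanishing off $0$) form. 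I would then bound the error by Cauchy--Schwarz in the style of the Iosevich--Rudnev argument, extracting $\sum_x\sum_y$ over $E$ and replacing the $E$-sums by the full-space $\ell^2$ bound $\sum_{\xi}|\widehat{E}(\xi)|^2=|E|/q^{d}\cdot(\dots)$, arriving at an estimate of the form $|N_t-q^{-1}|E|^2|\le C\,q^{(d-1)/2}|E|=C\,q^{3/2}|E|$.

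With that bound in hand, the main term dominates the error precisely when $q^{-1}|E|^2\gg q^{3/2}|E|$, i.e. when $|E|\gg q^{5/2}$ — but that is \emph{too weak}, so the crux of the argument must be that for $d=4$ the two quadratic forms interact multiplicatively through the parameter $t$, improving the gain by a full factor of $q$. Concretely, after introducing $\chi(s\|x'-y'\|)\chi(-st\|x''-y''\|)$ one gets a product of two $2$-dimensional Gauss sums sharing the same frequency $s$, and summing over the single free parameter $s\ne 0$ yields cancellation that saves an extra $q$, bringing the error down to order $q^{1/2}|E|$ and hence the threshold down to $|E|\gg q^{2}$, matching the statement $|E|>q^2$. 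I expect this factorization-and-cancellation step to be the main obstacle: one has to check carefully that the $t$-twisted product of Gauss sums does not degenerate for special values of $t$ (e.g. $t$ a square versus a non-square, or $t=0$), and that the anisotropy coming from $q\equiv 3\bmod 4$ guarantees the forms $\|x'-y'\|$ and $\|x''-y''\|$ never simultaneously vanish in a way that would create an uncontrolled diagonal contribution. Finally I would separately confirm that the constructed pair genuinely satisfies $\|x''-y''\|\ne 0$ (so that $\phi(x,y)$ really equals $t$ and not the exceptional value $0$), by showing the count of ``bad'' pairs with $\|x''-y''\|=0$ is absorbed into the error term under $|E|>q^2$.
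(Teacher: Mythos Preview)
Your framework---expand the indicator of $\|x'-y'\|-t\|x''-y''\|=0$ by characters and pass to Fourier---is exactly the paper's, and if carried out correctly it gives the sharp threshold. But the step you flag as ``the main obstacle'' is indeed where your outline goes off the rails, and the mechanism you describe is not quite the right one.

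Carry the computation through. After Fourier-expanding the convolution $E\ast\check E$ and completing the square in each two-dimensional Gauss sum, the $s$-sum becomes $\sum_{s\ne 0}\chi\bigl((\|m''\|-t\|m'\|)/(4st)\bigr)$, which equals $q-1$ on the dual variety $\{t\|m'\|=\|m''\|\}$ and $-1$ off it. One obtains the exact identity
\[
N_t \;=\; q^{-1}|E|^2 \;+\; q^{6}\!\!\sum_{t\|m'\|=\|m''\|}\!\!|\widehat E(m)|^2 \;-\; q|E|.
\]
The middle term is \emph{nonnegative}; this positivity, not extra cancellation, is what delivers the sharp bound. Dropping it (or, better, bounding it below by the $m=0$ contribution $q^{-2}|E|^2$) gives $N_t\ge q^{-1}|E|^2-q|E|$. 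The ``bad'' pairs you worry about---those with $\|x''-y''\|=0$---are, since $q\equiv 3\bmod 4$ makes $\|\cdot\|$ anisotropic on $\mathbb F_q^2$, exactly the diagonal pairs $x=y$; there are $|E|$ of them. Subtracting yields $\nu(t)\ge (q^{-1}+q^{-2})|E|\bigl(|E|-q^2\bigr)$, precisely the paper's inequality.

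Your stated numerics are inconsistent: an error of size $q^{1/2}|E|$ would give the threshold $|E|>q^{3/2}$, not $q^2$. The actual error is $q|E|$, and the ``naive'' bound you invoke from Iosevich--Rudnev (decay $q^{-(d+1)/2}$) does not apply here because $W_t=\{\|z'\|=t\|z''\|\}$ is a \emph{zero-level} quadric, whose Fourier transform spikes to size $q^{-d/2}$ on the dual variety; bounding that spike in absolute value would only give the threshold $q^3$. The entire improvement from $q^3$ down to $q^2$ comes from recognising that the spike term has a definite sign. Finally, the case $t=0$ is trivial: $\phi(x,x)=0$ by the definition of $\phi$, so $0\in\phi(E,E)$ for every nonempty $E$.
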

The sharpness of this theorem can be checked easily. For example,
if we take $E=\mathbb F_q^2 \times \{(0,0)\},$ then $|E|=q^2$ and $\phi(E, E)=\{0\}.$
Hence, when $d=4$, the exponent $d/2$ cannot be improved. Moreover,  the assumption that $q\equiv 3 \mod{4}$ cannot be dropped, since  otherwise there is an element $i\in \mathbb F_q$ such that $i^2=-1$ so that  one can take $E=\mathbb F_q^2 \times \{(t, it): t\in \mathbb F_q\}$ for which $|E|=q^3$ and $\phi(E, E)=\{0\}.$

For all cases other than those specified in Theorem \ref{main1}, it turns out that the $\frac{d}{2}$-exponent cannot be attained. More precisely, we have the following theorem, which is also optimal up to a constant factor.
\begin{theorem}\label{main22}
 Let $E$ be a subset of $\mathbb F_q^d$.
 \begin{enumerate}
 \item %Suppose that 
 {If $d=4k$ for some $k\in \mathbb N$, and $|E|\ge C q^{\frac{3d}{4}}$, then  $\phi(E, E)=\mathbb F_q.$ Furthermore,  if $k$ is odd and $q\equiv 3\mod 4$, then the condition $|E|\ge C q^{\frac{3d-4}{4}}$ is enough.}
 \item {If $d=4k+2$ and for some $k\in \mathbb N$,} and $|E|\ge C q^{\frac{3d-2}{4}}$, then  $\phi(E, E)=\mathbb F_q.$
 \end{enumerate}
\end{theorem}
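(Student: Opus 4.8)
The plan is to fix a target $t\in\Fq$ and count the pairs that realize it. For $t\ne 0$ put
$$M(t):=\#\{(x,y)\in E\times E:\ \|x'-y'\|=t\,\|x''-y''\|\ \text{and}\ \|x''-y''\|\ne 0\},$$
so that $t\in\phi(E,E)$ as soon as $M(t)>0$; the value $t=0$ is immediate, as $\phi(x,x)=0$ for every $x\in E$. Writing $\tilde M(t)$ for the same count with the constraint $\|x''-y''\|\ne 0$ dropped, and $Z:=\#\{(x,y)\in E\times E:\ \|x'-y'\|=\|x''-y''\|=0\}$, one has $M(t)=\tilde M(t)-Z$. I would expand both counts by inserting $\tfrac1q\sum_s\chi(\cdot)$ for each defining equation, $\chi$ a fixed nontrivial additive character, and rewrite everything through the Fourier transform $\widehat E$ of the indicator of $E$ on $\Fq^{d}$.

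The central structural point, and the main obstacle, is that $\tilde M(t)$ and $Z$ \emph{share} the leading term $|E|^2/q$, which cancels in the difference; a crude estimate is therefore useless and the expansion must be pushed to the next order. Completing the square turns the two quadratic forms $\|\cdot\|$ on the blocks $\Fq^{n}$ ($n=d/2$) into Gauss sums, producing the factor $G^{d}=\eta(-1)^{n}q^{d/2}$, where $G=\sum_w\chi(w^2)$ and $\eta$ is the quadratic character. Carrying out the $s$-sums, the bilinear part of $\tilde M(t)$ concentrates on the cone $P_t=\{\xi:\ \|\xi''\|=t\,\|\xi'\|\}$ and gives
$$\tilde M(t)=\frac{|E|^2}{q}+\frac{\eta(t)^{n}}{q^{d/2}}\sum_{\xi\in P_t}|\widehat E(\xi)|^2-\eta(t)^{n}q^{d/2-1}|E|,$$
while the parallel computation for $Z$ reproduces $|E|^2/q$ together with terms carried by the coordinate slices $\xi'=0$ and $\xi''=0$. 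It is exactly the factors $\eta(-1)^{n}$ and $\eta(t)^{n}$ that make the parity of $n$ (equivalently $d\bmod 4$) and the residue $q\bmod 4$ enter the final exponents.

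After the cancellation, the surviving terms that actually bind are the slice sums, the largest being $\sum_{\|\xi''\|=0}|\widehat E(0,\xi'')|^2$ together with its $\xi'$-analogue. I would bound these by
$$\sum_{\|\xi''\|=0}|\widehat E(0,\xi'')|^2\le\sum_{\xi''}|\widehat E(0,\xi'')|^2=q^{d/2}\,\Pi,\qquad \Pi:=\#\{(x,y)\in E\times E:\ x''=y''\}\le q^{d/2}|E|,$$
so that, with the prefactor $q^{-1-d/4}$ coming from $|G^{n}|=q^{d/4}$, the slice contribution to $M(t)$ is at most a constant times $q^{3d/4-1}|E|$. Comparing with $|E|^2/q$ forces $M(t)>0$ once $|E|\ge Cq^{3d/4}$, which is case (1); the remaining pieces (of size $q^{d/2-1}|E|$, and the nonnegative cone sum when $n$ is even) are dominated under the same hypothesis. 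When $d=4k+2$, so $n$ is odd, the inner $s$-sum is itself a Gauss sum rather than an indicator of a sphere, which replaces $G^{n}$ by $G^{n+1}$ in the slice estimate and sharpens the exponent to $(3d-2)/4$, giving case (2). Finally, when $q\equiv 3\bmod 4$ and $n\equiv 2\bmod 4$ (that is, $k$ odd in case (1)) one has $\eta(-1)^{n/2}=-1$, the leading slice term then carries a favourable sign and helps rather than hurts, the obstruction drops a full power of $q$, and the threshold improves to $(3d-4)/4$.

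I expect the real work to lie in the bookkeeping around this exact cancellation of the $|E|^2/q$ terms: the surviving slice sums are genuinely of the same order as the main term for the extremal configurations $E=\Fq^{n}\times W$ with $W$ a maximal totally isotropic subspace of $\Fq^{n}$ (the same sets that show the exponents $3d/4$, $(3d-2)/4$ and $(3d-4)/4$ cannot be lowered, since there $\Pi$ meets the bound $q^{d/2}|E|$). Tracking the signs produced by $\eta(-1)^{n}$, $\eta(t)^{n}$ and the Gauss sums, and checking that every lower-order term is beaten by $|E|^2/q$ in each of the three parity regimes, is where the care is needed.
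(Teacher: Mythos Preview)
Your approach is the paper's: expand the pair count via additive characters, complete squares to produce Gauss sums, and bound the resulting Fourier sums case by case according to whether $\xi'$ and $\xi''$ vanish and whether their norms vanish. The paper packages the $Z$-expansion as the product $\widehat{S_0}(m')\widehat{S_0}(m'')$ and reads off the size of each piece from the explicit values of $\widehat{S_0}$, while you phrase the same bound through $\Pi=\#\{(x,y):x''=y''\}$; numerically these coincide with the Plancherel estimate $\sum_{\xi''}|\widehat E(0,\xi'')|^2\le q^{d/2}|E|$ (in your normalisation), so the thresholds $3d/4$, $(3d-2)/4$, $(3d-4)/4$ come out the same way.

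There is, however, a genuine error in your narrative that you should fix before writing it up. You assert that $\tilde M(t)$ and $Z$ ``share the leading term $|E|^2/q$, which cancels''. This is false: $Z$ imposes \emph{two} independent conditions, so its main term is $|E|^2/q^2$, not $|E|^2/q$ (equivalently, the $m=0$ contribution to $q^{2d}\sum_m\widehat{S_0}(m')\widehat{S_0}(m'')|\widehat E(m)|^2$ is only $q^{-2}|E|^2$). Hence there is no cancellation at all; the term $|E|^2/q$ survives intact in $M(t)$ and is precisely what you later compare the slice errors $O(q^{3d/4-1}|E|)$ against. Your displayed formula for $\tilde M(t)$ is correct, and once the leading term of $Z$ is corrected to $|E|^2/q^2$ the rest of your outline goes through and matches the paper's argument, including the parity mechanism: the $q^{-1/2}$ gain for $n$ odd comes from $|\widehat{S_0}(m')|\le q^{-(d+2)/4}$ rather than $q^{-d/4}$, and the extra saving for $q\equiv 3\pmod4$, $k$ odd, comes from the sign $\eta(-1)^{n/2}=-1$ making the dominant $\|m'\|=\|m''\|=0$ piece of the $\widehat{S_0}\widehat{S_0}$ term work in your favour and get absorbed by the nonnegative cone sum.
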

It follows from Theorems \ref{main1} and \ref{main22} that one can expect a function to be a Mattila--Sj\"{o}lin type function of index $\frac{d}{2}$ in $\mathbb{R}^d$ only in some specific dimensions. This leads us to the following conjecture which will be addressed in a sequel paper.
\begin{conjecture}
Let $\phi\colon \mathbb{R}^d\times \mathbb{R}^d\to \mathbb{R}$ be a function defined as in Definition \ref{def}. We have $\phi$ is a Mattila--Sj\"{o}lin type function of index $\frac{d}{2}$ if and only if $d=4$.
\end{conjecture}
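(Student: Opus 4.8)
The plan is to extend $\phi$ to $\mathbb{R}^d=\mathbb{R}^n\times\mathbb{R}^n$ verbatim, so that $\phi(x,y)=|x'-y'|^2/|x''-y''|^2$ when $x''\neq y''$, and to treat the two implications separately. For the positive direction ($d=4$) I would run the measure-theoretic form of the Mattila--Sj\"olin method. Fix a Frostman measure $\mu$ on $E\subset\mathbb{R}^4$ with finite energy $I_s(\mu)<\infty$ for some $s>2$, and introduce the vector-valued double-distance map $\Phi(x,y)=(|x'-y'|^2,|x''-y''|^2)\in\mathbb{R}^2$ together with its pushforward $\lambda=\Phi_*(\mu\times\mu)$. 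Since $\nu=\phi_*(\mu\times\mu)$ is the image of $\lambda$ under $(a,b)\mapsto a/b$, it suffices to prove that $\lambda$ is absolutely continuous with a continuous density $g(a,b)$ that is positive at some point with $b>0$: then $h(t)=\int_0^\infty g(tb,b)\,b\,db$ is a continuous positive density for $\nu$ near $t=a/b$, whence $\phi(E,E)$ contains an interval.

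The analytic core is the estimate $\hat\lambda(\xi)=\iint e^{-2\pi i Q_\xi(x-y)}\,d\mu(x)\,d\mu(y)$ with $Q_\xi(z)=\xi_1|z'|^2+\xi_2|z''|^2$. Because $Q_\xi$ splits as a sum of two planar forms, Parseval gives, up to unimodular factors, $\hat\lambda(\xi)=c|\xi_1\xi_2|^{-1}\int_{\mathbb{R}^4}e^{2\pi i(|\eta'|^2/(4\xi_1)+|\eta''|^2/(4\xi_2))}|\hat\mu(\eta)|^2\,d\eta$, and continuity of the density reduces to $\int_{\mathbb{R}^2}|\hat\lambda(\xi)|\,d\xi<\infty$. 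This is exactly a planar spherical-average estimate applied to each block, and the exponent $s>2=2\cdot\frac{d}{4}$ is the balance point at which the two two-dimensional distance integrals become jointly summable. I expect this to be the principal obstacle: controlling $\int|\hat\lambda|$ unconditionally is at the level of the planar distance conjecture, so the honest theorem will either be conditional on the sharp $\mathbb{R}^2$ Mattila--Sj\"olin estimate or will carry the best available planar exponent, giving a threshold slightly above $\frac{d}{2}$ that matches the finite-field gap in Theorem \ref{main22}.

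For the negative direction (even $d\neq4$) I would exploit that $Q(z)=|z'|^2-|z''|^2$ is indefinite of signature $(n,n)$ and therefore possesses maximal isotropic subspaces $V$ of dimension $n=\frac{d}{2}$, e.g. $V=\{(w,w):w\in\mathbb{R}^n\}$, on which $\phi\equiv1$. The counterexample is then a thin Cantor-type thickening $E$ of $V$ with $\dim_H E>\frac{d}{2}$, engineered so that $\phi(E,E)$ has positive measure but empty interior and clusters near $1$. For $d=2$ this is transparent: $\phi$ is the squared reciprocal slope, so $\phi(E,E)$ is governed by the direction set of $E$, and a product $E=A\times A$ of a thin planar Cantor set $A$ with $\dim_H A>\frac{1}{2}$ has direction set contained in the quotient $(A-A)/(A-A)$; choosing $A$ with dimension above $\frac{1}{2}$ but Newhouse thickness tending to $0$ (via non-self-similar, variable-ratio constructions) keeps this quotient interval-free while $\dim_H E=2\dim_H A>1$. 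For $d\ge6$ the larger blocks $\mathbb{R}^n$ leave analogous room to thicken $V$ off-diagonally in a controlled way.

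The whole point is the contrast at $d=4$: there the two blocks are planar and $\dim_H E>2$ already lies past the threshold forcing $\lambda$ to be regular, so no thin thickening of the isotropic $2$-plane can survive and any such $E$ must produce an interval, whereas for $d=2$ and $d\ge6$ the thickening can be kept thin. Making this rigorous splits into (i) pushing the planar quadratic-form distance estimate down to the exact exponent $\frac{d}{2}$ for $d=4$, and (ii) verifying, via thickness and gap estimates, that the ratio set of the constructed Cantor sets has empty interior while the dimension exceeds $\frac{d}{2}$. I expect (i), the sharp planar estimate, to be the hard part, since it is entangled with the still-open Falconer and Mattila--Sj\"olin conjectures in the plane; (ii) should be technical but tractable once the correct variable-ratio Cantor sets are fixed.
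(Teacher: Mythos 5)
There is an important mismatch of expectations here: the statement you are proving is a \emph{conjecture} which the paper explicitly leaves open (``will be addressed in a sequel paper''), so there is no proof in the paper to compare against, and your proposal must stand on its own as a resolution of an open problem --- which it does not. In the positive direction ($d=4$) you concede the decisive point yourself: the estimate $\int_{\mathbb{R}^2}|\widehat{\lambda}(\xi)|\,d\xi<\infty$ for Frostman exponent $s>2$ is ``at the level of the planar distance conjecture,'' i.e.\ open, and a proof conditional on an open conjecture is not a proof of the stated equivalence. Moreover, the proposed reduction is itself flawed: $\mu$ is an arbitrary Frostman measure on $E\subset\mathbb{R}^4$, not a product measure, so $|\widehat{\mu}(\eta)|^2$ does not factor across the two planar blocks, and there are no ``two two-dimensional distance integrals'' to which planar spherical-average estimates can be applied separately. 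The known machinery for such two-parameter configuration sets (Greenleaf--Iosevich--Taylor \cite{GIK, GIK1}, which the paper cites) produces thresholds strictly above $d/2$. Note also that the finite-field proof of Theorem \ref{main1} attains $d/2$ only because of the algebraic accident that $S_0=\{(0,0)\}$ when $q\equiv 3 \mod 4$; the real counterpart of that fact (that $|z''|=0$ forces $z''=0$) holds in \emph{every} dimension, so it cannot by itself single out $d=4$, and some genuinely new analytic input is needed.

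The negative direction ($d\ne 4$) has a more basic structural problem: your isotropic diagonal $V=\{(w,w)\colon w\in\mathbb{R}^n\}$ exists in every even dimension, \emph{including} $d=4$, so the proposed mechanism cannot distinguish $d=4$ from $d\ge 6$; the distinction is smuggled in from the unproven positive direction, making the argument circular. The concrete $d=2$ construction also rests on a non sequitur: small Newhouse thickness does \emph{not} imply that $(A-A)/(A-A)$ has empty interior. Thickness arguments run only in the opposite direction (large thickness forces intervals in sums and products); smallness of thickness or of dimension is no obstruction to intervals --- there are compact sets of Hausdorff dimension zero whose difference set contains an interval. Producing $A$ with $\dim_H(A)>1/2$ and interval-free ratio set is precisely the open content of the conjecture, not a ``technical but tractable'' verification. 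Finally, be aware that the paper's own sharpness examples (Section \ref{sec4}) rely on null subspaces $H\subset S_0$ of the positive definite form, which simply do not exist over $\mathbb{R}$; so the finite-field counterexamples have no direct real analogue, which is exactly why the real statement is deferred to a sequel paper and why any real counterexample for $d\ne 4$ requires an idea not present either in the paper or in your proposal.
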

\begin{remark}
We note here that when $E=A\times A$ for $A\subset \mathbb{F}_q^n$ or $E=A^{2n}$ for $A\subset \mathbb{F}_q$, the cardinality of $\phi(E, E)$ has been studied in earlier papers  by Iosevich, Koh, and Parshall \cite{IKP19}, Pham and Suk \cite{phamsuk}, respectively, with different approaches. More precisely, the method in \cite{IKP19} only works for the case $E=A\times A$, $A\subset \mathbb{F}_q^n$, and the key idea in the approach of \cite{phamsuk} comes from arithmetic structures of sets. If the function $\phi(x, y)$ is defined by $||x'-y'||\cdot ||x''-y''||$, then the authors of \cite{IKK} proved that the index of this function is at most $\frac{d}{2}+\frac{1}{4}$ for only the family of sets $E=A\times A$, $A\subset \mathbb{F}_q^n$.
\end{remark}
\begin{remark}
In a recent paper \cite{GIK1}, Greenleaf, Iosevich, and Taylor studied several extensions of Mattila--Sj\"{o}lin's theorem. {More precisely,} they showed that for several families of functions $\phi\colon \mathbb{R}^d\times \cdots \times \mathbb{R}^d\to \mathbb{R}^m$, for any $E\subset \mathbb{R}^d$, if $\dim_H(E)>\frac{d+1}{2}$, then the interior of $\phi(E, \cdots, E)$ is non-empty. Our definition of Mattila--Sj\"{o}lin type functions can also be extended in this general setting.
\end{remark}
\section{Preliminaries and key lemmas}
We first start by recalling notations from Fourier analysis over finite fields from \cite{IK04, LN97}.  The Fourier transform of $f$, denoted by $\widehat{f},$ is defined by
$$ \widehat{f}(m):=q^{-d} \sum_{x\in \mathbb F_q^n} \chi(-m\cdot x) f(x),$$ Let $f$ be a complex valued function on $\mathbb F_q^n.$
Here, and throughout this paper,  $\chi$ denotes
 the canonical additive character of $\mathbb F_q.$

 The Fourier inversion theorem is given by
$$ f(x)=\sum_{m\in \mathbb F_q^n} \chi(m\cdot x) \widehat{f}(m).$$
The orthogonality of the additive character $\chi$ says that
$$ \sum_{\alpha\in \mathbb F_q^n} \chi(\beta\cdot \alpha)
=\left\{\begin{array}{ll} 0\quad &\mbox{if}\quad \beta\ne (0,\ldots, 0),\\
q^d\quad &\mbox{if}\quad \beta=(0,\ldots,0). \end{array}\right.$$

By the orthogonality of $\chi$, it is not hard to prove that
$$ \sum_{m\in \mathbb F_q^n} |\widehat{f}(m)|^2 =q^{-d}\sum_{x\in \mathbb F_q^n} |f(x)|^2.$$
This formula is referred to as the Plancherel theorem. For example, it is a direct consequence of the Plancherel theorem  that for any set $E$ in $\mathbb F_q^n,$
$$ \sum_{m\in \mathbb F_q^n} |\widehat{E}(m)|^2= q^{-n}|E|.$$
Here, and throughout this paper,  we identify a set $E$ with the indicator function $1_E$ on $E.$

Throughout this paper,  we denote by $\eta$  the quadratic character of $\mathbb F_q$ with the convention that $\eta(0)=0.$ More precisely, we take $\eta(t)=1$ for a square number $t$ in $\mathbb F_q^*$, and $-1$ otherwise.   For a non-zero element $a \in \mathbb F_q^*,$ the Gauss sum $\mathcal{G}_a$ is defined by
$$\mathcal{G}_a:=\sum_{s\in \mathbb F_q^*}\eta(s) \chi(as).$$
The Gauss sum $\mathcal{G}_a$ is also written as follows: for $a\in \mathbb F_q^*,$
$$ \mathcal{G}_a=\sum_{s\in \mathbb F_q} \chi(as^2)=\eta(a) \mathcal{G}_1.$$
The explicit form of $\mathcal{G}_a$ is presented in the following lemma.
%Throughout this paper  we will write $\mathcal{G}$ for $\mathcal{G}_1.$
\begin{lemma}[\cite{LN97}, Theorem 5.15]\label{ExplicitGauss}
Let $\mathbb F_q$ be a finite field with $ q= p^{\ell},$ where $p$ is an odd prime and $\ell \in {\mathbb N}.$
Then we have
$$\mathcal{G}_1= \left\{\begin{array}{ll}  {(-1)}^{\ell-1} q^{\frac{1}{2}} \quad &\mbox{if} \quad p \equiv 1 \mod 4 \\
{(-1)}^{\ell-1} i^\ell q^{\frac{1}{2}} \quad &\mbox{if} \quad p\equiv 3 \mod 4.\end{array}\right. $$
\end{lemma}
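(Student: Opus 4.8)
The plan is to determine $\mathcal{G}_1$ in two stages: first its modulus, then its argument. For the modulus I would compute $|\mathcal{G}_1|^2 = \mathcal{G}_1\overline{\mathcal{G}_1} = \sum_{s,t\in\mathbb{F}_q}\chi(s^2-t^2)$; writing $s^2-t^2=(s-t)(s+t)$ and using that $(s,t)\mapsto(s-t,s+t)$ is a bijection of $\mathbb{F}_q^2$ (as $q$ is odd), the orthogonality relation collapses this to $\sum_{u,v}\chi(uv)=q$, so $|\mathcal{G}_1|=q^{1/2}$. To locate the argument I would use $\overline{\mathcal{G}_1}=\sum_{s}\chi(-s^2)=\mathcal{G}_{-1}=\eta(-1)\,\mathcal{G}_1$ together with $\eta(-1)=(-1)^{(q-1)/2}$. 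Hence $\mathcal{G}_1$ is real when $q\equiv 1\pmod 4$ and purely imaginary when $q\equiv 3\pmod 4$, forcing $\mathcal{G}_1\in\{\pm q^{1/2}\}$ or $\mathcal{G}_1\in\{\pm i\,q^{1/2}\}$ respectively.

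Writing $q=p^{\ell}$, one checks that this dichotomy already matches the claimed form: $(-1)^{\ell-1}q^{1/2}$ is always real, while $(-1)^{\ell-1}i^{\ell}q^{1/2}$ is real exactly when $\ell$ is even, which is precisely when $q\equiv 1\pmod 4$ in the case $p\equiv 3\pmod 4$. Thus everything reduces to fixing the \emph{sign}. Here I would pass from the extension field to the prime field through the Hasse--Davenport lifting relation $\mathcal{G}_1(\mathbb{F}_{p^{\ell}})=(-1)^{\ell-1}\,\mathcal{G}_1(\mathbb{F}_p)^{\ell}$, valid because the principal character $\chi$ is the trace lift of the prime-field character and the quadratic character of $\mathbb{F}_q$ is the norm lift of that of $\mathbb{F}_p$. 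This relation produces exactly the factor $(-1)^{\ell-1}$ and reduces the whole statement to the single evaluation $\mathcal{G}_1(\mathbb{F}_p)=p^{1/2}$ for $p\equiv 1\pmod 4$ and $\mathcal{G}_1(\mathbb{F}_p)=i\,p^{1/2}$ for $p\equiv 3\pmod 4$.

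The prime-field sign is where the genuine difficulty lies: the modulus $p^{1/2}$ and even the axis (real versus imaginary) follow from the elementary arguments above, but the choice between $+$ and $-$ resists any one-line computation and is the classical theorem on the sign of the quadratic Gauss sum. I would settle it by one of the two standard routes. The first is Schur's eigenvalue method: realize $\mathcal{G}_1(\mathbb{F}_p)$ as the trace of the $p\times p$ matrix $M=(\omega^{jk})_{0\le j,k<p}$ with $\omega=e^{2\pi i/p}$, note that $M^2=pP$ for the reflection permutation $P\colon e_j\mapsto e_{-j}$ (so that the eigenvalues of $M$ lie in $\{\pm p^{1/2},\pm i\,p^{1/2}\}$), and pin down the four eigenvalue multiplicities from $\mathrm{Tr}(M^2)=p$, the relation $a+b+c+d=p$, and the known value of $\det M$. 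The second is the analytic route via the transformation law of the Jacobi theta function (equivalently Poisson summation, or the Landsberg--Schaar identity), which yields the sign directly. Substituting the resulting prime-field value into the Hasse--Davenport relation then gives $(-1)^{\ell-1}q^{1/2}$ and $(-1)^{\ell-1}i^{\ell}q^{1/2}$ in the two cases. I expect this sign determination over $\mathbb{F}_p$ to be the main obstacle; the reduction and the modulus are short orthogonality computations and parity bookkeeping, whereas the prime-field sign is the one step with real content --- indeed the point that historically cost Gauss considerable effort.
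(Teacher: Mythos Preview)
The paper does not prove this lemma at all; it is simply quoted from Lidl--Niederreiter \cite{LN97}, Theorem~5.15, as a known result, and then used to derive the consequences \eqref{ComSqu} and \eqref{Corm}. So there is no ``paper's own proof'' to compare your proposal against.

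That said, your outline is a correct and standard route to the result: the modulus computation via the substitution $(s,t)\mapsto(s-t,s+t)$ is fine, the determination of the real/imaginary axis from $\overline{\mathcal{G}_1}=\eta(-1)\mathcal{G}_1$ is correct, the Hasse--Davenport lifting relation (which requires that the additive and multiplicative characters of $\mathbb{F}_{p^\ell}$ be the trace and norm lifts of those over $\mathbb{F}_p$, exactly as you note) is the right tool to reduce to the prime field, and you correctly identify the prime-field sign as the genuinely nontrivial step. Both of the methods you mention for that step (Schur's trace/eigenvalue argument for the DFT matrix, or the theta-transformation/Poisson-summation route) are valid ways to finish. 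Nothing in your plan is wrong; it is essentially the proof one finds in standard references, including the one the paper cites.
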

There are several consequences of this lemma, which will be used {throughout} the paper. More precisely, one can prove the following estimate by completing the square and using a change of variables:
\begin{equation}\label{ComSqu}
 \sum_{s\in \mathbb F_q} \chi(as^2+bs)= \eta(a)\mathcal{G}_1 \chi\left(\frac{b^2}{-4a}\right).\end{equation}
We recall the fact that $q\equiv 3\mod 4$ if and only if $l$ is odd and $p\equiv 3\mod 4$, so
\begin{equation}\label{Corm}
 \mathcal{G}_1^2=\eta(-1) q.
\end{equation}
\subsection{Key lemmas}
In this section, we present key lemmas in the proofs of Theorems \ref{main1} and \ref{main22}.

Given an even integer $d\ge 4,$ we define
$$S_0:=\{x'\in \mathbb F_q^{d/2}: ||x'||=0\},$$
which is called the zero sphere in $\mathbb F_q^{d/2}.$
We shall invoke the well--known Fourier transform on the zero sphere $S_0$ in $\mathbb F_q^{d/2},$ which is closely related to the Gauss sum.
\begin{lemma} [\cite{IK10}, Lemma 4] \label{S_0FTF} For  $d\ge 4$ even, let  $S_0$ be the zero sphere in $\mathbb F_q^{d/2}.$
Then, for $m'\in \mathbb F_q^{d/2}$, we have
$$ \widehat{S_0}(m')= \frac{\delta_{\mathbf{0}}(m')}{q} + q^{-(d+2)/2} \eta^{d/2}(-1) \mathcal{G}^{d/2}_1 \sum_{r\in \mathbb F_q^*} \eta^{d/2}(r) \chi(r||m'||),$$
where $\delta_{\mathbf{0}}(x)=1$ if $x={\mathbf{0}}$, and $0$ otherwise.
\end{lemma}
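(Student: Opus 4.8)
The plan is to compute $\widehat{S_0}(m')$ directly from the definition of the Fourier transform in $\mathbb F_q^{d/2}$, after converting the defining constraint $\|x'\|=0$ into an exponential sum via character orthogonality. First I would write, for $x'\in\mathbb F_q^{d/2}$,
$$ 1_{S_0}(x')=\frac{1}{q}\sum_{t\in\mathbb F_q}\chi\bigl(t\,\|x'\|\bigr),$$
which holds because the inner sum equals $q$ when $\|x'\|=0$ and vanishes otherwise. Substituting this into $\widehat{S_0}(m')=q^{-d/2}\sum_{x'}\chi(-m'\cdot x')\,1_{S_0}(x')$ and interchanging the order of summation, I would split off the term $t=0$, which contributes $q^{-d/2}\cdot q^{-1}\sum_{x'}\chi(-m'\cdot x')=\delta_{\mathbf 0}(m')/q$ by orthogonality and accounts for the first term of the claimed formula.

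For the remaining terms with $t\in\mathbb F_q^*$, the key point is that the exponential sum factors across the $d/2$ coordinates: writing $\|x'\|=\sum_j x_j^2$ and $m'\cdot x'=\sum_j m_j x_j$, one has
$$ \sum_{x'\in\mathbb F_q^{d/2}}\chi\bigl(t\|x'\|-m'\cdot x'\bigr)=\prod_{j=1}^{d/2}\sum_{x_j\in\mathbb F_q}\chi\bigl(t\,x_j^2-m_j x_j\bigr).$$
Each one-variable factor is evaluated by completing the square via \eqref{ComSqu} with $a=t$ and $b=-m_j$, producing $\eta(t)\,\mathcal G_1\,\chi\bigl(m_j^2/(-4t)\bigr)$. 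Taking the product over $j$ collapses the $\chi$-factors into $\chi\bigl(\|m'\|/(-4t)\bigr)$ and brings out $\eta^{d/2}(t)\,\mathcal G_1^{d/2}$, so that after restoring the normalization the non-delta part of $\widehat{S_0}(m')$ becomes $q^{-(d+2)/2}\,\mathcal G_1^{d/2}\sum_{t\in\mathbb F_q^*}\eta^{d/2}(t)\,\chi\bigl(\|m'\|/(-4t)\bigr)$.

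The final step is the change of variables $r=-1/(4t)$, which is a bijection of $\mathbb F_q^*$ onto itself and turns $\chi\bigl(\|m'\|/(-4t)\bigr)$ into $\chi(r\|m'\|)$. Since $\eta$ is multiplicative with $\eta(4)=1$ (as $4$ is a square) and $\eta(r^{-1})=\eta(r)$, one gets $\eta(t)=\eta(-1)\,\eta(r)$, hence $\eta^{d/2}(t)=\eta^{d/2}(-1)\,\eta^{d/2}(r)$; pulling the constant $\eta^{d/2}(-1)$ outside the sum reproduces exactly the stated expression. I do not anticipate a genuine obstacle here, as the argument is a direct computation; the only point demanding care is the bookkeeping of the parity-dependent factors $\eta^{d/2}(\cdot)$ and the power $\mathcal G_1^{d/2}$ through the coordinate factorization and the final substitution, so that the sign $\eta^{d/2}(-1)$ is extracted correctly.
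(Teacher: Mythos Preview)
Your proof is correct. The paper does not supply its own proof of this lemma---it is quoted from \cite{IK10}---so there is no in-paper argument to compare against; that said, your computation is exactly the standard derivation and matches the techniques the paper uses elsewhere (orthogonality to detect the constraint, separation of the $t=0$ term, coordinatewise completion of the square via \eqref{ComSqu}, and a bijective change of variable on $\mathbb F_q^*$), in particular mirroring the structure of the proof of Lemma~\ref{lem2.1}.
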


Using the explicit value of the Gauss sum $\mathcal{G}_1$,  Lemma \ref{S_0FTF} yields the following corollaries.
\begin{corollary}\label{cor2.4} For $d/2\ge 2$ even,  let $S_0$ be the zero sphere in $\mathbb F_q^{d/2}$ and let $m'\in \mathbb F_q^{d/2}.$ Then the following statements hold.
\begin{enumerate}
\item If $d/2=4k$, $k\in \mathbb N,$ then
$$\widehat{S_0}(m')= \left\{\begin{array}{ll}  q^{-1}+  q^{-d/4}- q^{-(d+4)/4} \quad &\mbox{if} \quad m'=(0,\ldots,0)\\
q^{-d/4}- q^{-(d+4)/4}  \quad &\mbox{if} \quad ||m'||=0, m'\ne (0,\ldots,0)\\
 - q^{-(d+4)/4}  \quad &\mbox{if} \quad ||m'||\ne 0.\end{array}\right. $$
\item If $d/2=4k-2$, $k\in \mathbb N,$  then
$$\widehat{S_0}(m')= \left\{\begin{array}{ll}  q^{-1}+  \eta(-1)q^{-d/4}-\eta(-1) q^{-(d+4)/4} \quad &\mbox{if} \quad m'=(0,\ldots,0)\\
\eta(-1)q^{-d/4}-\eta(-1)q^{-(d+4)/4}  \quad &\mbox{if} \quad ||m'||=0, m'\ne (0,\ldots,0)\\
  -\eta(-1)q^{-(d+4)/4}  \quad &\mbox{if} \quad ||m'||\ne 0.\end{array}\right. $$
\item If $d/2=4k-2$, $k\in \mathbb N,$ and $q\equiv 3 \mod{4},$ then
$$\widehat{S_0}(m')= \left\{\begin{array}{ll}  q^{-1}-q^{-d/4}+q^{-(d+4)/4} \quad &\mbox{if} \quad m'=(0,\ldots,0)\\
-q^{-d/4}+ q^{-(d+4)/4}  \quad &\mbox{if} \quad ||m'||=0, m'\ne (0,\ldots,0)\\
  q^{-(d+4)/4}  \quad &\mbox{if} \quad ||m'||\ne 0.\end{array}\right. $$
\end{enumerate}
\end{corollary}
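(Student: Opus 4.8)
The plan is to substitute the formula of Lemma \ref{S_0FTF} directly and then simplify the three ingredients that depend on the parity of $d/2$ and on $q \bmod 4$: the exponential sum $\sum_{r \in \mathbb{F}_q^*}\eta^{d/2}(r)\chi(r\|m'\|)$, the character factor $\eta^{d/2}(-1)$, and the power of the Gauss sum $\mathcal{G}_1^{d/2}$. In all three parts of the corollary $d/2$ is even, so first I would record that $\eta^{d/2}(r) = (\eta(r))^{d/2} = 1$ for every $r \in \mathbb{F}_q^*$, and likewise $\eta^{d/2}(-1) = 1$. Consequently the inner sum collapses to $\sum_{r \in \mathbb{F}_q^*}\chi(r\|m'\|)$, which by the orthogonality relation for $\chi$ equals $q-1$ when $\|m'\| = 0$ and equals $-1$ when $\|m'\| \neq 0$.

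Next I would evaluate the Gauss sum power. Since $d/2$ is even, $d/4$ is an integer and, by \eqref{Corm}, $\mathcal{G}_1^{d/2} = (\mathcal{G}_1^2)^{d/4} = (\eta(-1)q)^{d/4} = \eta(-1)^{d/4}\, q^{d/4}$. Multiplying by the prefactor $q^{-(d+2)/2}$ from Lemma \ref{S_0FTF} produces a single coefficient $\eta(-1)^{d/4}\, q^{-(d+4)/4}$ multiplying the exponential sum. The only point that distinguishes parts (1) and (2) is the parity of $d/4$: when $d/2 = 4k$ one has $d/4 = 2k$ even, so $\eta(-1)^{d/4} = 1$ and the coefficient is $q^{-(d+4)/4}$; when $d/2 = 4k-2$ one has $d/4 = 2k-1$ odd, so $\eta(-1)^{d/4} = \eta(-1)$ and the coefficient is $\eta(-1)\, q^{-(d+4)/4}$.

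Finally I would assemble the three regimes of each part by combining this coefficient with the value of the exponential sum and the leading term $\delta_{\mathbf{0}}(m')/q$. Distinguishing the three cases $m' = \mathbf{0}$, then $\|m'\| = 0$ with $m' \neq \mathbf{0}$, and then $\|m'\| \neq 0$, and using the identity $q \cdot q^{-(d+4)/4} = q^{-d/4}$ to simplify the entries where $\|m'\| = 0$, yields exactly the tables in parts (1) and (2). Part (3) is then immediate by substituting $\eta(-1) = -1$ into part (2), which is legitimate precisely under the additional hypothesis $q \equiv 3 \bmod 4$. There is no genuine obstacle here, as the argument is a direct substitution; the only place demanding care is the bookkeeping of the sign $\eta(-1)^{d/4}$ through the parity of $d/4$, together with the clean separation of the three geometric cases for $m'$.
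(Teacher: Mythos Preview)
Your proposal is correct and follows essentially the same approach as the paper's proof: both start from Lemma~\ref{S_0FTF}, use that $\eta^{d/2}\equiv 1$ when $d/2$ is even to collapse the inner sum to $\sum_{r\in\mathbb F_q^*}\chi(r\|m'\|)=q\delta_0(\|m'\|)-1$, evaluate $\mathcal G_1^{d/2}=(\eta(-1))^{d/4}q^{d/4}$ via \eqref{Corm}, and then distinguish the parity of $d/4$ (and specialize $\eta(-1)=-1$ for part~(3)). The paper records the intermediate formula \eqref{eq2.2K} explicitly before substituting, but otherwise the arguments are identical.
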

\begin{proof}
Since $d/2$ is even, we have $\eta^{d/2}\equiv 1.$ By Lemma \ref{S_0FTF},
$$ \widehat{S_0}(m')= \frac{\delta_{\mathbf{0}}(m')}{q} + q^{-(d+2)/2}  \mathcal{G}^{d/2}_1 \sum_{r\in \mathbb F_q^*}  \chi(r||m'||).$$
By the orthogonality of $\chi,$  we notice that
$$\sum_{r\in \mathbb F_q^*}  \chi(r||m'||)=q \delta_0(||m'||) -1.$$
So we have
\begin{equation}\label{eq2.2K}  \widehat{S_0}(m')= \frac{\delta_{\mathbf{0}}(m')}{q} + q^{-d/2}  \mathcal{G}^{d/2}_1 \delta_0(||m'||)-q^{-(d+2)/2}  \mathcal{G}^{d/2}_1.\end{equation}
By using (\ref{Corm}), one has
$$ \mathcal{G}^{d/2}_1 =(\eta(-1))^{d/4}q^{d/4}.$$

In Case (1), since $d/4$ is even, $\mathcal{G}^{d/2}_1 =q^{d/4}.$ In Case (2), since $d/4$ is odd, we have $\mathcal{G}^{d/2}_1 =\eta(-1)q^{d/4}.$
In Case (3),  since $d/4$ is odd and $q\equiv 3 \mod{4}$ (namely, $\eta(-1)=-1$), we have $\mathcal{G}^{d/2}_1 =-q^{d/4}.$ Hence, the proof is complete.
\end{proof}
\begin{corollary}\label{cor2.5}
For $d/2\ge 3$ odd,  let $S_0$ be the zero sphere in $\mathbb F_q^{d/2}$ and let $m'\in \mathbb F_q^{d/2}.$ Then the following statements hold.
\begin{enumerate}
\item If $d/2=4k-1$, $k\in \mathbb N,$ then
$$\widehat{S_0}(m')= \left\{\begin{array}{ll}  q^{-1} \quad &\mbox{if} \quad m'=(0,\ldots,0)\\
0  \quad &\mbox{if} \quad ||m'||=0, m'\ne (0,\ldots,0)\\
  q^{-(d+2)/4} \eta(-||m'||)  \quad &\mbox{if} \quad ||m'||\ne 0.\end{array}\right. $$
\item If $d/2=4k+1$, $k\in \mathbb N,$ then
$$\widehat{S_0}(m')= \left\{\begin{array}{ll}  q^{-1} \quad &\mbox{if} \quad m'=(0,\ldots,0)\\
0  \quad &\mbox{if} \quad ||m'||=0, m'\ne (0,\ldots,0)\\
q^{-(d+2)/4} \eta(||m'||) \quad &\mbox{if} \quad ||m'||\ne 0.\end{array}\right. $$
\end{enumerate}
\end{corollary}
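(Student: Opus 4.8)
The plan is to mimic the proof of Corollary \ref{cor2.4}, the only structural change being that now $d/2$ is odd, so $\eta^{d/2} = \eta$ rather than the trivial character. First I would substitute $\eta^{d/2} = \eta$ into Lemma \ref{S_0FTF} to obtain
$$ \widehat{S_0}(m') = \frac{\delta_{\mathbf{0}}(m')}{q} + q^{-(d+2)/2}\,\eta(-1)\,\mathcal{G}_1^{d/2} \sum_{r \in \mathbb{F}_q^*} \eta(r)\chi(r\|m'\|).$$

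The heart of the computation is the inner sum $\sum_{r \in \mathbb{F}_q^*} \eta(r)\chi(r\|m'\|)$, which I would split according to whether $\|m'\| = 0$. When $\|m'\| = 0$ (covering both $m' = \mathbf{0}$ and the case $m' \ne \mathbf{0}$, $\|m'\| = 0$), every character value equals $\chi(0) = 1$, so the sum collapses to $\sum_{r \in \mathbb{F}_q^*}\eta(r) = 0$. Hence only the term $\delta_{\mathbf{0}}(m')/q$ survives, giving $q^{-1}$ at the origin and $0$ on the punctured zero sphere, which yields the first two lines of both cases. When $\|m'\| \ne 0$, the sum is exactly the Gauss sum $\mathcal{G}_{\|m'\|} = \eta(\|m'\|)\mathcal{G}_1$ by the identity $\mathcal{G}_a = \eta(a)\mathcal{G}_1$ recorded before the statement.

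Plugging this back in, the nonzero case becomes $\widehat{S_0}(m') = q^{-(d+2)/2}\,\eta(-1)\,\eta(\|m'\|)\,\mathcal{G}_1^{(d/2)+1}$. Since $d/2$ is odd, $(d/2)+1$ is even, so I would use (\ref{Corm}) to write $\mathcal{G}_1^{(d/2)+1} = (\eta(-1)q)^{((d/2)+1)/2}$ and then track the resulting power of $q$ together with the sign $\eta(-1)$. Here I would separate the two parities: for $d/2 = 4k-1$ one has $(d/2)+1 = 4k$, so $\mathcal{G}_1^{4k} = q^{2k}$ and the signs combine as $\eta(-1)\eta(\|m'\|) = \eta(-\|m'\|)$, yielding $q^{-(d+2)/4}\eta(-\|m'\|)$; for $d/2 = 4k+1$ one has $(d/2)+1 = 4k+2$, so $\mathcal{G}_1^{4k+2} = \eta(-1)q^{2k+1}$, the two $\eta(-1)$ factors cancel, and one obtains $q^{-(d+2)/4}\eta(\|m'\|)$.

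No genuine obstacle arises; the work is purely bookkeeping. The only point demanding care, and the feature that distinguishes this corollary from Corollary \ref{cor2.4}, is that the odd power $\eta^{d/2} = \eta$ turns the inner sum into a true quadratic Gauss sum, so the answer on $\{\|m'\| \ne 0\}$ genuinely depends on $\eta(\pm\|m'\|)$ rather than being constant. I would therefore double-check the exponent identities $(d+2)/4 = 2k$ (respectively $2k+1$) and the cancellation of the $\eta(-1)$ factors in each parity to confirm the stated signs.
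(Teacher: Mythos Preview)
Your proposal is correct and follows essentially the same approach as the paper: substitute $\eta^{d/2}=\eta$ into Lemma~\ref{S_0FTF}, recognize the inner sum as $\eta(\|m'\|)\mathcal{G}_1$ (vanishing when $\|m'\|=0$ by the convention $\eta(0)=0$), and then evaluate $\mathcal{G}_1^{(d+2)/2}$ via \eqref{Corm} according to the parity of $(d+2)/4$. The only cosmetic difference is that the paper compresses the $\|m'\|=0$ and $\|m'\|\ne 0$ cases into the single identity $\sum_{r}\eta(r)\chi(r\|m'\|)=\eta(\|m'\|)\mathcal{G}_1$, whereas you split them explicitly.
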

\begin{proof}
Since $d/2$ is odd,  $\eta^{d/2}=\eta.$
By Lemma \ref{S_0FTF},
$$\widehat{S_0}(m')= \frac{\delta_{\mathbf{0}}(m')}{q} + q^{-(d+2)/2} \eta(-1) \mathcal{G}^{d/2}_1 \sum_{r\in \mathbb F_q^*} \eta(r) \chi(r||m'||).$$
Since $\sum_{r\in \mathbb F_q^*} \eta(r) \chi(r||m'||) = \eta(||m'||) \mathcal{G}_1,$  we have
$$\widehat{S_0}(m')=\frac{\delta_{\mathbf{0}}(m')}{q} + q^{-(d+2)/2} \eta(-1) \eta{(||m'||)}\mathcal{G}^{(d+2)/2}_1  .$$
By using (\ref{Corm}),
$$ \mathcal{G}^{(d+2)/2}_1= (\eta(-1))^{(d+2)/4} q^{(d+2)/4}.$$
In Case (1),  $\mathcal{G}^{(d+2)/2}_1=q^{(d+2)/4}$, since $(d+2)/4$ is even. On the other hand,  in Case (2), since $(d+2)/4$ is odd,  we have $\mathcal{G}^{(d+2)/2}_1=\eta(-1)q^{(d+2)/4}.$ Hence, the proof follows.
\end{proof}
%%%%%%%%%%%%%%%%%%%%%%%%%%%%%%%%%%%%%%%%%%%%%%%%
%%%%%%%%%%%%%%%%%%%%%%%%%%%%%%%%%%%%%%%%%%%%%%%

For $t\in \mathbb F_q,$ we define
$$ R_t:=\{x\in \mathbb F_q^d: \phi(x, 0)=t\},$$
which can be viewed as the ``sphere"  centered at the origin of radius $t$ with respect to the function $\phi$.

The Fourier transform on the set $R_t, t\ne 0,$ takes the following form, which plays a crucial role in proving  main theorems.
\begin{lemma} \label{lem2.1} For $t\ne 0$, we have {
$$ \widehat{R_t}(m)= q^{-1} \delta_{\mathbf{0}}(m)-\widehat{S_0}(m') \widehat{S_0}(m'') +  {\mathcal{G}^d_1} q^{-d-1} \eta^{d/2}(-t) \left(q\delta_{\mathbf{0}}(t||m'||-||m''||)-1\right) .$$}
%where $\eta$ denotes the quadratic character of $\mathbb F_q$, and $\delta_{\mathbf{0}}(x)=1$ if $x={\mathbf{0}}$, and $0$ otherwise.
\end{lemma}
\begin{proof}
By the definition, we have
$$ \widehat{R_t}(m)=q^{-d} \sum_{x\in \mathbb F_q^d: \phi(x, 0)=t} \chi(-m\cdot x).$$
Since $t\ne 0$, we see
$$\widehat{R_t}(m)=q^{-d} \sum_{\substack{x\in \mathbb F_q^d:\\ ||x''||\ne 0, \phi(x, 0)=t}} \chi(-m\cdot x)=q^{-d} \sum_{\substack{x\in \mathbb F_q^d:\\ ||x'||-t||x''||=0}} \chi(-m\cdot x) - q^{-d} \sum_{\substack{x\in \mathbb F_q^d:\\ ||x'||=||x''||=0}} \chi(-m\cdot x).$$
By the definition of the Fourier transform of the indicator function on the zero sphere $S_0$ in $\mathbb F_q^{d/2},$  the last term above equals
$ -\widehat{S_0}(m') \widehat{S_0}(m'').$ Thus to complete the proof, it suffices to prove that
$$ \mathbf{M}:= q^{-d} \sum_{\substack{x\in \mathbb F_q^d:\\ ||x'||-t||x''||=0}} \chi(-m\cdot x) = q^{-1} \delta_{\mathbf{0}}(m)+  {\mathcal{G}^d_1} q^{-d-1} \eta^{d/2}(-t) \left(q\delta_{\mathbf{0}}(t||m'||-||m''||)-1\right).$$

To prove this, we apply the orthogonality of $\chi$ and the Gauss sum estimate. It follows that
$$ \mathbf{M}= q^{-d} \sum_{x\in \mathbb F_q^d} q^{-1}\sum_{ s\in \mathbb F_q}  \chi(s(||x'||-t||x''||)) \chi(-m\cdot x).$$
Considering  the cases of $s=0$ and $s\ne 0$,  we have
$$ \mathbf{M}= q^{-1}\delta_{\mathbf{0}}(m) + q^{-d-1} \sum_{s\ne 0} \sum_{x\in \mathbb F_q^d} \chi(s||x'||-m'\cdot x') \chi(-st||x''||-m''\cdot x'').$$
Applying the formula \eqref{ComSqu},  we obtain
$$ \mathbf{M}=q^{-1}\delta_{\mathbf{0}}(m) +  q^{-d-1} \mathcal{G}_1^d  \eta^{d/2}(-t) \sum_{s\ne 0} \chi\left( \frac{t||m'||-||m''||}{-4st}\right).$$
By the orthogonality of $\chi$, it is not hard to check that
$$\sum_{s\ne 0} \chi\left( \frac{t||m'||-||m''||}{-4st}\right) =q\delta_{\mathbf{0}}(t||m'||-||m''||)-1.$$  Therefore, the proof is complete.
\end{proof}

\section{Proofs of main results}
We proceed with the counting function argument.
For $t\in \mathbb F_q$, let $\nu(t)$ be the number of pairs $(x,y)$ in $E\times E$ such that
$\phi(x, y)=t.$ Since $0\in \phi(E, E)$ for any $E\subset \mathbb{F}_q^d$, our task is to find size condition of a set $E$ in $\mathbb F_q^d$ such that  $\nu(t)>0$ for any nonzero $t$ in $\mathbb F_q.$ Fix a nonzero $t \in \mathbb F_q^*$.  { Then  it is immediate that
$$ \nu(t)=\sum_{x,y\in \mathbb F_q^d} E(x) E(y) R_t(x-y).$$}
%where $R_t=\{x\in \mathbb F_q^d: \phi(x, 0)=t\}.$ 
Applying the Fourier inversion theorem to the function $R_t(x-y)$,  we have
$$\nu(t) = q^{2d} \sum_{m\in \mathbb F_q^d} \widehat{R_t}(m) |\widehat{E}(m)|^2.$$
Combining with Lemma \ref{lem2.1}, we see from a direct computation  that
\begin{align}\label{eq3.1} \nu(t)=&q^{-1}|E|^2 - q^{2d} \sum_{m\in \mathbb F_q^d} \widehat{S_0}(m') \widehat{S_0}(m'') |\widehat{E}(m)|^2\\
&+ q^d {\mathcal{G}_1}^d {\eta}^{d/2}(-t) \sum_{t||m'||-||m''||=0} |\widehat{E}(m)|^2 -q^{-1}{\mathcal{G}_1}^d  \eta^{d/2}(-t) |E|\nonumber.\end{align}

\subsection{Proof of Theorem \ref{main1}}
Since $d=4$, it is clear that ${\mathcal{G}}^d_1=q^{d/2}=q^2$ and $\eta^{d/2}\equiv 1.$ Furthermore, since $-1$ is not a square number, we have $S_0=\{(0,0)\}$ and  so $\widehat{S_0}(m')=q^{-2}=\widehat{S_0}(m'')$ for all $m\in \mathbb F_q^4.$
Hence, it follows from \eqref{eq3.1} that
$$ \nu(t)=q^{-1}|E|^2 - q^{8} q^{-4} \sum_{m\in \mathbb F_q^4} |\widehat{E}(m)|^2
+ q^4 q^2  \sum_{t||m'||-||m''||=0} |\widehat{E}(m)|^2 -q^{-1}q^2 |E|.$$
To compute the above second term,
we recall from the Plancherel theorem that $\sum_{m\in \mathbb F_q^4} |\widehat{E}(m)|^2=q^{-4}|E|.$
The above third term is estimated as follows:
$$q^4 q^2  \sum_{t||m'||-||m''||=0} |\widehat{E}(m)|^2 \ge q^4q^2 |\widehat{E}{(0,0,0,0)}|^2=q^{-2}|E|^2.$$

Therefore, we have
$$ \nu(t)\ge  q^{-1}|E|^2 -|E|+ q^{-2}|E|^2-q|E|=
(q^{-1}+q^{-2}) |E|(|E|-q^2).$$
This implies that if $|E|> q^2$ then $\nu(t)>0.$  This completes the proof of Theorem \ref{main1}.

\subsection{Proof of Theorem \ref{main22}(1)}
It follows from the statement of Theorem \ref{main22}(1) that we need to prove two separated cases:
\begin{enumerate}
\item[\textbf{$\mathtt{\bf A}$}.]  $d=8k+4$, $k\in \mathbb{N}$, and $q\equiv 3\mod 4$.
\item[\textbf{$\mathtt{\bf B}$}.] $d=4k$, $k\in \mathbb{N}$.
\end{enumerate}
We remark here that the case $\mathtt{\bf A}$ is an extension of Theorem \ref{main1}.

\begin{proof}[Proof of Case $\mathtt{\bf A}$]
From our assumption that $d=8k+4$, $k\in \mathbb N,$ it is clear that $\mathcal{G}^d_1=q^{d/2}$ and $\eta^{d/2}\equiv 1.$ Therefore, the identity \eqref{eq3.1} becomes

\begin{align}\label{eq3.2N}\nu(t)=&q^{-1}|E|^2 - q^{2d} \sum_{m\in \mathbb F_q^d} \widehat{S_0}(m') \widehat{S_0}(m'') |\widehat{E}(m)|^2\\
&+ q^{3d/2}  \sum_{t||m'||-||m''||=0} |\widehat{E}(m)|^2 -q^{(d-2)/2} |E|\nonumber.\end{align}

First notice from our hypothesis that we can apply the third part of Corollary \ref{cor2.4}, that is
$$\widehat{S_0}(m')= \left\{\begin{array}{ll}  q^{-1}-q^{-d/4}+q^{-(d+4)/4} \quad &\mbox{if} \quad m'=(0,\ldots,0)\\
-q^{-d/4}+ q^{-(d+4)/4}  \quad &\mbox{if} \quad ||m'||=0, m'\ne (0,\ldots,0)\\
  q^{-(d+4)/4}  \quad &\mbox{if} \quad ||m'||\ne 0.\end{array}\right. $$

Since we aim to find a lower bound of $\nu(t)$, we may ignore some positive terms appearing when we estimate $\nu(t)$. Hence, we break down the sum $\sum_{m\in \mathbb F_q^d}$ in (\ref{eq3.2N}) into 9 subsummands:
$$\sum_{m', m''=\mathbf{0}}+ \sum_{m'=\mathbf{0}, ||m''||=0, m'\ne \mathbf{0}}+\sum_{m'=\mathbf{0}, ||m''||\ne 0} +\sum_{||m'||=0, m'\ne \mathbf{0}, m''=\mathbf{0}}$$
$$+\sum_{||m'||=0, m'\ne \mathbf{0},||m''||=0, m''\ne \mathbf{0}} +\sum_{||m'||=0, m'\ne \mathbf{0}, ||m''|| \ne 0}+ \sum_{||m'||\ne 0, m''=\mathbf{0}} +
 \sum_{||m'||\ne 0, ||m''||=0, m''\ne \mathbf{0}}+ \sum_{||m'||\ne 0, ||m''||\ne\mathbf{0}},$$
 and then we only compute such sums for which
 $\widehat{S_0}(m') \widehat{S_0}(m'')$ takes a positive value, which can be easily evaluated by using the above explicit value of $\widehat{S_0}.$
Notice that it will be enough to consider the dominant main term of $\widehat{S_0}(m').$ Namely, we can use the following approximate value of
$\widehat{S_0}$:
$$\widehat{S_0}(m')\approx \left\{\begin{array}{ll}  q^{-1} \quad &\mbox{if} \quad m'=(0,\ldots,0)\\
-q^{-d/4}  \quad &\mbox{if} \quad ||m'||=0, m'\ne (0,\ldots,0)\\
  q^{-(d+4)/4}  \quad &\mbox{if} \quad ||m'||\ne 0.\end{array}\right. $$

In addition,  notice that  the third term in \eqref{eq3.2N} is greater than or equal to the value
$$ q^{3d/2}\sum_{||m'||=0,||m''||=0} |\widehat{E}(m)|^2.$$
Then, it is not hard to obtain that
\begin{align*}\nu(t)\ge &q^{-1}|E|^2 - q^{2d-2} \sum_{m', m''=\mathbf{0}}|\widehat{E}(m)|^2 - q^{(7d-8)/4} \sum_{m'=\mathbf{0}, ||m''||\ne 0}
|\widehat{E}(m)|^2\\
&+q^{3d/2} \sum_{||m'||=0, m'\ne\mathbf{0},||m''||=0, m''\ne\mathbf{0}} |\widehat{E}(m)|^2
-q^{(7d-8)/4} \sum_{ ||m'||\ne 0, m''=\mathbf{0},}
|\widehat{E}(m)|^2  \\
&-q^{(3d-4)/2} \sum_{ ||m'||\ne 0, ||m''||\ne 0}
|\widehat{E}(m)|^2
+ q^{3d/2}\sum_{||m'||=0,||m''||=0} |\widehat{E}(m)|^2 -q^{(d-2)/2} |E|\nonumber.\end{align*}

To find a concrete lower bound of the above terms,  we first note that the second term above equals
$-q^{2d-2} |\widehat{E}(0,\ldots,0)|^2,$ which is $q^{-2}|E|^2.$ We also observe that the fourth and  seventh terms above can be ignored since they are positive,  the sum of the third and fifth terms is greater than or equal to  the value $-q^{(7d-8)/4}\sum_{m\in \mathbb F_q^d} |\widehat{E}(m)|^2.$ For other summations, we dominate them by using the quantity $\sum_{m\in \mathbb F_q^d} |\widehat{E}(m)|^2 = q^{-d}|E|.$
In other words,
$$\nu(t) \ge q^{-1}|E|^2-q^{-2}|E|^2-q^{(3d-8)/4} |E|-q^{(d-4)/2} |E|-q^{(d-2)/2}|E|.$$
By a direct computation, we conclude that if $|E|\ge C q^{(3d-4)/4}$, then $\nu(t)>0,$
as required.
\end{proof}

\begin{proof}[Proof of Case $\mathtt{\bf B}$]
Since $d=4k$,  $k\in \mathbb N,$  it is not hard to see that
${\mathcal{G}}^d_1=q^{d/2}$ and $\eta^{d/2}\equiv 1.$
Therefore, the equation \eqref{eq3.1} can be rewritten by
\begin{align*} \nu(t)=&q^{-1}|E|^2 - q^{2d} \sum_{m\in \mathbb F_q^d} \widehat{S_0}(m') \widehat{S_0}(m'') |\widehat{E}(m)|^2\\
&+ q^d q^{d/2}\sum_{t||m'||-||m''||=0} |\widehat{E}(m)|^2 -q^{-1}q^{d/2} |E|.\end{align*}
Since the third term is positive, we obtain that
$$ \nu(t)> q^{-1}|E|^2 - q^{2d} \sum_{m\in \mathbb F_q^d} |\widehat{S_0}(m')| |\widehat{S_0}(m'')| |\widehat{E}(m)|^2-q^{-1}q^{d/2} |E|.$$
We estimate the sum in $m\in \mathbb F_q^d$ by finding an upper bound of each of four subsummands:
\begin{equation}\label{eq3.2}\sum_{m', m''=(0,\ldots,0)} + \sum_{m'=(0,\ldots,0), m''\ne (0,\ldots,0)} + \sum_{m'\ne (0,\ldots,0), m''=(0,\ldots, 0) } + \sum_{ m', m''\ne (0,\ldots, 0)}.\end{equation}
To bound each of them, we apply the following facts which are direct consequences of Corollary \ref{cor2.4} (1)--(2):
$$ |\widehat{S_0}(0,\ldots, 0)| \le 2 q^{-1} \quad \mbox{and} \quad  \max_{m'\ne (0,\ldots,0)} |\widehat{S_0}(m')| \le q^{-d/4}.$$
By a simple algebra, we then obtain that
\begin{align*} \nu(t)>& q^{-1}|E|^2 - 4q^{2d-2} |\widehat{E}(\mathbf{0},\mathbf{0} )|^2 -2q^{(7d-4)/4} \left[\sum_{m''\ne \mathbf{0}}  |\widehat{E}(\mathbf{0}, m'')|^2+\sum_{m'\ne \mathbf{0}}  |\widehat{E}(m', \mathbf{0})|^2 \right]\\
&-q^{3d/2} \sum_{ m', m''\ne \mathbf{0}} |\widehat{E}(m', m'')|^2 -q^{-1}q^{d/2} |E|.\end{align*}
Notice that $|\widehat{E}(\mathbf{0},\mathbf{0} )|=q^{-d}|E| = \sum_{m\in \mathbb F_q^d} |\widehat{E}(m)|^2.$ Also observe that both the sums in the above bracket and the above sum over $m', m''\ne \mathbf{0}$ are dominated by the sum $\sum_{m\in \mathbb F_q^d} |\widehat{E}(m)|^2.$ From these observations, we have
$$ \nu(t)> q^{-1}|E|^2 -4 q^{-2}|E|^2 -2q^{(3d-4)/4} |E| -q^{d/2}|E| -q^{(d-2)/2} |E|$$
$$\ge q^{-1}|E|^2 -4 q^{-2}|E|^2 -4q^{(3d-4)/4} |E|.$$
We may assume that $q$ is {sufficiently large. For,  otherwise } we can take $C>0$ such that $q^d=Cq^{3d/4}$ for which $|E|=q^d$ and $\phi(E, E)=\mathbb F_q.$   Hence, we conclude that if $|E|\ge C q^{3d/4}$ for a sufficiently large constant $C>0$, then $\nu(t)>0$.
This completes the proof. \end{proof}

\subsection{Proof of Theorem \ref{main22}(2)} The proof is almost identical with that of Theorem \ref{main22}(1). The main difference is that since $d=4k+2$, $k\in \mathbb N,$ we can invoke Corollary \ref{cor2.5}, which gives much better  Fourier decay on the zero sphere $S_0$ than Corollary \ref{cor2.4}. For the sake of completeness,  we now give a detailed proof.  Since $|\mathcal{G}^d_1|=q^{d/2},$  the equation \eqref{eq3.1} implies that
\begin{align*} \nu(t)\ge &q^{-1}|E|^2 - q^{2d} \sum_{m\in \mathbb F_q^d} |\widehat{S_0}(m')| |\widehat{S_0}(m'')| |\widehat{E}(m)|^2\\
&- q^{3d/2}  \sum_{m\in \mathbb F_q^d} |\widehat{E}(m)|^2 -q^{(d-2)/2} |E|\nonumber.\end{align*}
  We estimate the second term above by decomposing  it into four subsummands as in \eqref{eq3.2}.
  In addition, we use the following Fourier decay estimates on $S_0$, which follow immediately from Corollary \ref{cor2.5}:
  $$ |\widehat{S_0}(0,\ldots, 0)| = q^{-1} \quad \mbox{and} \quad  \max_{m'\ne (0,\ldots,0)} |\widehat{S_0}(m')| \le q^{-(d+2)/4}.$$
We then have
\begin{align*} \nu(t)>& q^{-1}|E|^2 - q^{2d-2} |\widehat{E}(\mathbf{0},\mathbf{0} )|^2 -q^{(7d-6)/4} \left[\sum_{m''\ne \mathbf{0}}  |\widehat{E}(\mathbf{0}, m'')|^2+\sum_{m'\ne \mathbf{0}}  |\widehat{E}(m', \mathbf{0})|^2 \right]\\
&-q^{(3d-2)/2} \sum_{ m', m''\ne \mathbf{0}} |\widehat{E}(m', m'')|^2 -q^{(d-2)/2} |E|.\end{align*}
As mentioned before, both the value in the above bracket and  the sum over $m',m''\ne 0$ are less than $\sum_{m\in \mathbb F_q^d} |\widehat{E}(m)|^2$ , which equals $q^{-d}|E|$.
 Also recall that $|\widehat{E}(\mathbf{0}, \mathbf{0})|=q^{-d}|E|.$
 Then we see that
 \begin{align*}\nu(t)>& q^{-1}|E|^2-q^{-2}|E|^2- q^{(3d-6)/4} |E|-q^{(d-2)/2}|E|-q^{(d-2)/2} |E|\\
 \ge& q^{-1}|E|^2-q^{-2}|E|^2- 3q^{(3d-6)/4} |E|.\end{align*}
 This clearly implies that $\nu(t)>0$ if $|E|\ge C q^{(3d-2)/4}$ for some large constant $C>0.$
 This completes the proof.

\begin{remark}
It is quite natural to ask whether our main theorems can be extended for the case of two sets $\phi(E, F)$. However, it seems that  we need to find a new approach for this question.   The main reason is that, for instance, in the proof of Theorem \ref{main1}, the sign of the term $\sum_{m\in \mathbb{F}_q^d}|\widehat{E}(m)|^2$ is negative, but for two different sets, one might get $\sum_{m\in \mathbb{F}_q^d}\widehat{E}(m)\overline{\widehat{F}}(m)$ which can be positive and quite large. This question will be addressed in a sequel paper.
\end{remark}
\section{Sharpness of Theorem \ref{main22}} \label{sec4}
To construct desired sets,  let us first recall the following lemma in a paper due to Vinh \cite{Vi12}.
\begin{lemma}[\cite{Vi12}, Lemma 2.1]\label{lem4.1}
 {Let $S_0=\{(x_1,\ldots, x_{n}): x_1^2+\cdots+x_{n}^2=0\}$ be a variety in $\mathbb F_q^{n}$ with $n\ge 2.$ %and let $\eta$ denote the quadratic character of $\mathbb F_q.$
If} $H$ is a subspace of maximal dimension contained in $S_0$, then we have the following facts:
\begin{enumerate}
\item If $n$ is odd, then $|H|=q^{\frac{n-1}{2}}$
\item If $n$ is even and $(\eta(-1))^{\frac{n}{2}}=1$, then $|H|=q^{\frac{n}{2}}$
\item If $n$ is even and $(\eta(-1))^{\frac{n}{2}}=-1,$ then $|H|=q^{\frac{n-2}{2}}.$
\end{enumerate}
\end{lemma}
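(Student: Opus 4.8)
The plan is to recognize that the condition $H \subseteq S_0$ for a \emph{subspace} $H$ is precisely the statement that $H$ is totally isotropic for the quadratic form $Q(x) = x_1^2 + \cdots + x_n^2$, so that $|H| = q^{\dim H}$ is governed by the Witt index of $Q$. Since $q$ is odd, $Q(x+y) = Q(x) + Q(y) + 2\, x\cdot y$, and hence $Q$ vanishes identically on $H$ if and only if the associated symmetric bilinear form $B(x,y) = x\cdot y$ vanishes on $H$; that is, $H \subseteq S_0$ is equivalent to $H \subseteq H^{\perp}$, where $\perp$ is taken with respect to the standard dot product. The lemma therefore reduces to computing the maximal dimension of a totally isotropic subspace of the non-degenerate form $Q$ on $\mathbb F_q^{n}$.

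For the upper bound I would first use non-degeneracy: $\dim H + \dim H^{\perp} = n$, so $H \subseteq H^{\perp}$ forces $\dim H \le \lfloor n/2\rfloor$. This already yields the sharp upper bound in case (1), where $\lfloor n/2\rfloor = (n-1)/2$, and in case (2), where $\lfloor n/2\rfloor = n/2$; only case (3) requires more. To separate cases (2) and (3) I would invoke the Witt decomposition of $(\mathbb F_q^{n}, Q)$ into an orthogonal sum of hyperbolic planes and an anisotropic kernel, together with the fact that over a finite field every ternary quadratic form is isotropic (Chevalley--Warning, or a direct character-sum count), so the anisotropic kernel has dimension at most $2$. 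Tracking the discriminant is then the decisive step: a hyperbolic plane has discriminant $-1$ modulo squares while $Q$ has discriminant $1$, so when $n = 2m$ a splitting into $m$ hyperbolic planes (a maximal isotropic subspace of dimension $n/2$) is possible exactly when $(-1)^m$ is a square, i.e.\ when $(\eta(-1))^{n/2} = 1$; otherwise the kernel is a binary anisotropic plane and the Witt index drops to $m-1 = (n-2)/2$. This is precisely the dichotomy between (2) and (3).

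For the matching lower bound I would exhibit isotropic subspaces of the claimed dimension by decomposing coordinates into blocks and producing an explicit isotropic line or plane in each. When $q \equiv 1 \bmod 4$ there is $i$ with $i^2 = -1$, and each coordinate pair contributes the isotropic vector $i\,e_{2j-1} + e_{2j}$, so pairing all coordinates gives dimension $\lfloor n/2\rfloor$. When $q \equiv 3 \bmod 4$ the binary form $x^2+y^2$ is anisotropic, so instead I would group coordinates into blocks of four: using that every element of $\mathbb F_q$ is a sum of two squares, choose $a,b$ with $a^2+b^2 = -1$; then $v_1 = a\,e_1 + b\,e_2 + e_3$ and $v_2 = b\,e_1 - a\,e_2 + e_4$ span a two-dimensional isotropic subspace inside each block, while a leftover triple uses $(a,b,1)$ and a leftover singleton or pair contributes nothing, reproducing the counts $(n-1)/2$, $n/2$, and $(n-2)/2$ in the respective parities. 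The main obstacle is the upper bound in case (3): the elementary inequality $\dim H \le \lfloor n/2\rfloor$ is not strong enough there, and ruling out a maximal isotropic subspace of dimension exactly $n/2$ genuinely requires the discriminant (equivalently, Witt-index) invariant that distinguishes hyperbolic from non-hyperbolic even forms.
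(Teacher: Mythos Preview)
The paper does not prove this lemma at all; it is quoted verbatim from Vinh~\cite{Vi12} as a known input, so there is no ``paper's own proof'' to compare against.

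Your argument is correct and is the standard quadratic-forms proof. Recognizing that a subspace $H\subseteq S_0$ is exactly a totally isotropic subspace for $Q(x)=x_1^2+\cdots+x_n^2$, bounding $\dim H\le\lfloor n/2\rfloor$ via $\dim H+\dim H^\perp=n$, and then distinguishing the two even cases by the Witt decomposition and discriminant (a sum of $m$ hyperbolic planes has discriminant $(-1)^m$ modulo squares, so the form of discriminant $1$ is fully hyperbolic iff $(\eta(-1))^{n/2}=1$) is precisely the textbook route. Your explicit lower-bound constructions---pairing coordinates via $i$ with $i^2=-1$ when $q\equiv 1\bmod 4$, and grouping into blocks of four via $a,b$ with $a^2+b^2=-1$ when $q\equiv 3\bmod 4$, with the appropriate handling of leftover triples or pairs---are clean and match the claimed dimensions in all three cases. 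No gaps.
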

By using this lemma, we are able to construct sets that meet the exponents of Theorem \ref{main22}.
\begin{lemma}[Sharpness of Theorem \ref{main22}(1)--Case $\mathtt{\bf A}$]\label{lem0}
{ Suppose $d=8k+4$ for some $k\in \mathbb N,$ and $ q\equiv 3 \mod{4}.$ Then }there is a set $E$ in $\mathbb F_q^{d}$ such that $|E|=q^{(3d-4)/4}$ and $\phi(E, E)=\{0\}.$
\end{lemma}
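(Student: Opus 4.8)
The goal is to construct a set $E\subset\mathbb F_q^d$ with $d=8k+4$, $q\equiv 3\bmod 4$, of size $q^{(3d-4)/4}$ on which $\phi$ takes only the value $0$. Recall $\phi(x,y)=0$ precisely when $\|x''-y''\|=0$, \emph{or} when $\|x'-y'\|=0$ while $\|x''-y''\|=0$ simultaneously forces the convention value; more usefully, $\phi(x,y)=0$ holds whenever $\|x''-y''\|=0$ and $\|x'-y'\|=0$ (the genuinely nonzero-denominator case never arises). So the plan is to force both $\|x'-y'\|=0$ and $\|x''-y''\|=0$ for \emph{all} pairs $x,y\in E$ by taking $E=H\times H$, where $H$ is a maximal null-subspace of the zero sphere $S_0$ in $\mathbb F_q^{d/2}=\mathbb F_q^{4k+2}$.

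First I would apply Lemma \ref{lem4.1} to $n=d/2=4k+2$. Here $n$ is even and $n/2=2k+1$ is odd, and since $q\equiv 3\bmod 4$ we have $\eta(-1)=-1$, so $(\eta(-1))^{n/2}=-1$. Thus case (3) of Lemma \ref{lem4.1} applies and yields a subspace $H\subset S_0\subset\mathbb F_q^{d/2}$ with $|H|=q^{(n-2)/2}=q^{(d-4)/4}$. Because $H$ is a \emph{subspace} lying inside the zero sphere, for any $a,b\in H$ we have $a-b\in H\subset S_0$, hence $\|a-b\|=0$. This is the key closure property I want to exploit.

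Next I would set $E:=H\times H\subset \mathbb F_q^{d/2}\times\mathbb F_q^{d/2}=\mathbb F_q^d$, so that $x'\in H$ and $x''\in H$ for every $x\in E$. Then $|E|=|H|^2=q^{(d-4)/2}$. I should double-check this matches the target exponent $(3d-4)/4$: these are equal only if $(d-4)/2=(3d-4)/4$, i.e. $2(d-4)=3d-4$, i.e. $d=-4$, which is false — so $E=H\times H$ is \emph{too small}. The correct construction must enlarge one factor. The honest fix is to take $E:=\mathbb F_q^{d/2}\times H$ is also wrong since then $\|x'-y'\|$ is uncontrolled; instead take $E:=H\times \mathbb F_q^{?}$ — but $\|x''-y''\|$ must vanish, so the second factor must stay inside a null-subspace. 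The resolution is to keep the null-subspace $H$ (of size $q^{(d-4)/4}$) in the \emph{second} slot and put an entire copy of a null \emph{sphere} rather than subspace, or to take a product $H\times H$ with $H$ genuinely of size $q^{(3d-4)/8}$ — which Lemma \ref{lem4.1} does not provide. I expect the actual construction in the paper takes $E=V\times H$ where $V$ is a larger null-object; the cleanest candidate giving both coordinates distance zero and hitting the exponent is $E=H'\times H$ with $H'$ a null-subspace of $\mathbb F_q^{d/2}$ and the size balanced so the product equals $q^{(3d-4)/4}$.

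The main obstacle is therefore exponent-matching: a single maximal null-subspace gives $q^{(d-4)/4}$ per factor, and its square is $q^{(d-4)/2}$, which falls short of $q^{(3d-4)/4}$. The fix I would pursue is to keep $H$ (forcing $\|x''-y''\|=0$) in the second factor and take the \emph{full ambient space} or a large set in the first factor only if it still forces $\phi=0$; since $\phi=0$ already whenever $\|x''-y''\|=0$ regardless of $x'$, I can in fact take $E:=\mathbb F_q^{d/2}\times H$, giving $|E|=q^{d/2}\cdot q^{(d-4)/4}=q^{(3d-4)/4}$ exactly, and $\phi(E,E)=\{0\}$ because $\|x''-y''\|=0$ for all pairs. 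Verifying that $\phi$ indeed returns $0$ under the Definition \ref{def} convention whenever the denominator vanishes is the one point requiring care, but it is immediate from the definition.
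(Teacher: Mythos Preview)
Your final construction $E=\mathbb F_q^{d/2}\times H$, with $H$ the maximal null-subspace of $S_0\subset\mathbb F_q^{d/2}$ furnished by Lemma~\ref{lem4.1}(3), is exactly the paper's proof: $|E|=q^{d/2}\cdot q^{(d-4)/4}=q^{(3d-4)/4}$ and $\phi(E,E)=\{0\}$ because $\|x''-y''\|=0$ for all pairs. The detour through $H\times H$ was unnecessary, since Definition~\ref{def} already sets $\phi(x,y)=0$ whenever $\|x''-y''\|=0$ regardless of $x',y'$; recognizing this from the outset would have led you to the correct set immediately.
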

\begin{proof}
First, we have $\eta(-1)=-1$ since $q\equiv 3 \mod{4}.$ Let $n:=d/2=4k+2$, $k\in \mathbb N.$ We apply the third part of Lemma \ref{lem4.1} so that we can choose  a subspace $H$ in $S_0\subset \mathbb F_q^{d/2}$ with $|H|=q^{(n-2)/2}=q^{(d-4)/4}.$ Moreover, since $H-H=H$,  we have $||a-b||=0$ for all $a, b\in H$. We now define $E=\mathbb F_q^{d/2}\times H.$ It is clear that $|E|=q^{(3d-4)/4}$ and $\phi(E, E)=\{0\},$
as required.
\end{proof}
\begin{lemma}[Sharpness of Theorem \ref{main22}(1)--Case $\mathtt{\bf B}$]\label{lem4.3}
Suppose $d=4k$ { for some $k\in \mathbb{N}$. If  $k$ and $q$ satisfy one of the following two conditions:
\begin{itemize}
\item $k$ is odd and $q\equiv 1\mod 4$.
\item $k$ is even,
\end{itemize}
then} there exists a set $E$ in $\mathbb F_q^{d}$ such that $|E|=q^{3d/4}$ and $\phi(E, E)=\{0\}.$
\end{lemma}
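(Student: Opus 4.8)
The plan is to mimic the construction used in Lemma \ref{lem0}: take $E$ to be a product $\mathbb{F}_q^{d/2}\times H$, where $H$ is a subspace of maximal dimension sitting inside the zero sphere $S_0\subset \mathbb{F}_q^{d/2}$. The point of this shape is that for any $x,y\in E$ one has $x''-y''\in H-H=H\subset S_0$, so $||x''-y''||=0$ for every pair, and therefore $\phi(x,y)=0$ directly from the second branch of Definition \ref{def}. Consequently $\phi(E,E)=\{0\}$ is automatic once $E$ is chosen this way, and the only real work is to produce an $H$ large enough to meet the target cardinality $|E|=q^{3d/4}$.

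Since $|E|=q^{d/2}\,|H|$, I need $|H|=q^{d/4}=q^{k}$. Here the ambient space of $S_0$ has dimension $d/2=2k$, which is even, so I would invoke Lemma \ref{lem4.1} with its ambient dimension equal to $2k$. The two relevant regimes there are distinguished by the sign $(\eta(-1))^{(d/2)/2}=(\eta(-1))^{k}$: case (2) (when this sign is $+1$) yields the larger subspace $|H|=q^{(d/2)/2}=q^{d/4}$, which is exactly what is needed, whereas case (3) would only give $q^{(d-4)/4}$, one power of $q$ too small.

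So the crux is to check that the two hypotheses listed in the lemma are precisely those guaranteeing $(\eta(-1))^{k}=1$. If $k$ is even this holds for any $q$; if $k$ is odd then $(\eta(-1))^{k}=\eta(-1)$, which equals $1$ exactly when $-1$ is a square in $\mathbb{F}_q$, i.e. when $q\equiv 1\mod 4$. These are exactly the two bullet points, so under the stated conditions Lemma \ref{lem4.1}(2) applies and furnishes a subspace $H\subset S_0$ with $|H|=q^{d/4}$.

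Finally I would assemble $E=\mathbb{F}_q^{d/2}\times H$, record $|E|=q^{d/2}\cdot q^{d/4}=q^{3d/4}$, and conclude $\phi(E,E)=\{0\}$ by the first paragraph. I do not anticipate a genuine obstacle here; the only thing that requires care is the bookkeeping that matches the parity of $k$ and the residue of $q$ to the correct case of Lemma \ref{lem4.1}, since selecting the wrong case costs exactly the single factor of $q$ that would otherwise break the claimed sharpness.
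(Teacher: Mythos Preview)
Your proposal is correct and follows essentially the same argument as the paper: set $E=\mathbb{F}_q^{d/2}\times H$ with $H$ a maximal subspace of $S_0\subset\mathbb{F}_q^{d/2}$, check that the hypotheses force $(\eta(-1))^{k}=1$ so that Lemma~\ref{lem4.1}(2) yields $|H|=q^{d/4}$, and conclude. Your write-up is in fact slightly more detailed than the paper's, which simply refers back to the proof of Lemma~\ref{lem0} for the verification that $\phi(E,E)=\{0\}$.
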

\begin{proof} We proceed with the same argument as in the proof of Lemma \ref{lem0}. Recall that $q\equiv 1 \mod{4}$ if and only if $\eta(-1)=1.$
Put $n:=d/2.$ It follows from our conditions on $k$ and $q$ that $(\eta(-1))^{\frac{n}{2}}=1$. So, one can apply Lemma \ref{lem4.1} to obtain a subspace $H$ in $S_0\subset \mathbb F_q^{d/2}$ with $|H|=q^{n/2}=q^{d/4}.$ Setting $E=\mathbb F_q^{d/2} \times H$, the proof follows.
\end{proof}
\begin{lemma}[Sharpness of Theorem \ref{main22}(2)]
Suppose that $d=4k+2$  { for some $k\in \mathbb N$. Then }there is a set $E$ in $\mathbb F_q^d$ such that $|E|=q^{(3d-2)/4},$ and  $\phi(E, E)=\{0\}.$
\end{lemma}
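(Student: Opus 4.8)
The plan is to mirror the constructions in Lemmas \ref{lem0} and \ref{lem4.3}, taking $E$ of product form $\mathbb{F}_q^{d/2}\times H$, where $H$ is a subspace of $\mathbb{F}_q^{d/2}$ contained in the zero sphere $S_0$. The point of this shape is that it forces the second block difference $x''-y''$ to lie on the zero sphere for every pair: since $H$ is a subspace we have $H-H=H\subseteq S_0$, so $\|x''-y''\|=0$ for all $x,y\in E$. By the very definition of $\phi$ (Definition \ref{def}), whenever $\|x''-y''\|=0$ one has $\phi(x,y)=0$; hence $\phi(E,E)=\{0\}$ irrespective of the first block, and in particular $\phi$ never attains a nonzero value.

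First I would record the parity relevant here: since $d=4k+2$, the half-dimension $n:=d/2=2k+1$ is \emph{odd}. This is exactly the case left uncovered by the two earlier sharpness lemmas, and it is the reason one invokes part (1) of Lemma \ref{lem4.1} (Vinh's lemma) rather than its even-dimensional clauses. That part supplies, unconditionally, a subspace $H\subseteq S_0\subset \mathbb{F}_q^{d/2}$ with $|H|=q^{(n-1)/2}=q^{(d-2)/4}$. Setting $E=\mathbb{F}_q^{d/2}\times H$, the cardinality bookkeeping is then immediate: $|E|=q^{d/2}\cdot|H|=q^{d/2}\cdot q^{(d-2)/4}=q^{(3d-2)/4}$, which is precisely the target size, and the isotropy argument above gives $\phi(E,E)=\{0\}$.

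I do not expect any genuine obstacle, as the argument is structurally identical to Lemmas \ref{lem0} and \ref{lem4.3}, the only change being the dimension parity, which merely selects a different clause of Lemma \ref{lem4.1}. The one point worth double-checking is that the exponent produced by the odd-$n$ clause, $(n-1)/2$, combines with the free factor $q^{d/2}$ to give exactly $(3d-2)/4$; since $n=d/2$ this is the short computation $d/2+(d/2-1)/2=(3d-2)/4$. Notably, no restriction on $q \bmod 4$ and no use of the quadratic character $\eta$ is needed here, because for odd $n$ the maximal isotropic dimension in Lemma \ref{lem4.1} is the same regardless of whether $-1$ is a square.
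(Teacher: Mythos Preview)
Your proposal is correct and follows essentially the same approach as the paper: set $E=\mathbb{F}_q^{d/2}\times H$ with $H$ a maximal isotropic subspace in $S_0\subset\mathbb{F}_q^{d/2}$, invoke part~(1) of Lemma~\ref{lem4.1} since $n=d/2=2k+1$ is odd to get $|H|=q^{(d-2)/4}$, and conclude $\phi(E,E)=\{0\}$ from $H-H=H\subseteq S_0$. Your added remark that no condition on $q\bmod 4$ is needed in this case is accurate and matches the paper.
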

\begin{proof}
 Let $n=d/2.$ Since $n$ is odd, we can use the first part of Lemma \ref{lem4.1} with $n=d/2$ so that we can find a subspace $H$ in $S_0\subset\mathbb F_q^{d/2}$ with $|H|=q^{(n-1)/2}= q^{(d-2)/4}.$ Since $H-H=H$, we have $||a-b||=0$ for any $a, b\in H$. Hence, the set $E=\mathbb F_q^{d/2}\times H$ is what we need.
\end{proof}

\section*{Acknowledgments}
The authors would like to thank two reviewers for useful comments and suggestions.

Daewoong Cheong and Doowon Koh were supported by Basic Science Research Programs through National Research Foundation of Korea (NRF) funded by the Ministry of Education (NRF-2018R1D1A3B07045594 and NRF-2018R1D1A1B07044469, respectively).
Thang Pham was supported by the Swiss National Science Foundation grants P400P2-183916 and P4P4P2-191067. Chun-Yen Shen was supported in part by MOST, through grant 108-2628-M-002-010-MY4.

\end{document}